\documentclass{amsart}

\usepackage{amssymb}
\usepackage{amsfonts}
\usepackage{mathrsfs}

\input xy
\xyoption{all}

\usepackage{natbib} 

\newtheorem{theorem}{Theorem}[section]

\newtheorem{definition}[theorem]{Definition}
\newtheorem{lemma}[theorem]{Lemma}

\newcommand{\arrow}[3]{\ensuremath{#1\colon\!#2\!\rightarrow\!#3}} 




\newcommand{\clarrow}[2]{\ensuremath{#1\!\rightarrow\!#2}}
\newcommand{\clmarrow}[2]{\ensuremath{#1\!\rightarrowtail\!#2}}




\newcommand{\cprod}[2]{\ensuremath{#1\!\times\! #2}} 


\newdir{ >}{{}*!/-10pt/@{>}} 

\mathchardef\mhyphen="2D

\hyphenation{Groth-en-dieck}

\title{Zariski cohomology in second order arithmetic}
\author{Colin McLarty}

\begin{document}
\maketitle

\begin{abstract}The cohomology of coherent sheaves and sheaves of Abelian groups on Noetherian schemes are interpreted in second order arithmetic by means of a finiteness theorem.  This finiteness theorem provably fails for the \'etale topology even on Noetherian schemes.
\end{abstract}

\tableofcontents

\section{Overview}

We formalize the cohomology of coherent sheaves and sheaves of Abelian groups on Noetherian schemes in a weak set theory interpretable in second order arithmetic. In brief, much cohomology uses countable sets.  But this doubly understates the result.  Use of countable sets could reach high consistency strength while we need only the strength of second order arithmetic.  And our theorems are not restricted to the countable case.  They apply for example to polynomial rings over $\mathbb{R,C,Z}_p$, or $\mathbb{Q}_p$ if we assume those exist, albeit our foundation cannot prove they do.

This weak set theory is called ZFG[$0$] and it works just like ZFC with the blatant exception that ZFC proves many sets exist which ZFG[$0$] does not.  When ZFC proves a set exists by using power sets or function sets then ZFG[$0$] will generally not prove it exists without some finiteness condition -- such as the Noetherian condition on finite generation of ideals.  The key result in this paper is that ZFG[$0$] suffices to lift this finite generation property to all sheaves of ideals on Noetherian schemes (not only quasi-coherent sheaves of ideals, where it is obvious).

\section{Commutative algebra in ZFG[$0$]}
The set theory ZF[$0$] is Zermelo Fraenkel without the power set axiom.   It proves all sets $A,B$ have a cartesian product \cprod{A}{B}, every equivalence relation \clmarrow{E}{\cprod{A}{A}} on a set $A$ has a set of equivalence classes $A/E$, each set of sets has a union and by familiar tricks it has a disjoint union.  ZF[$0$] does not prove every set of sets has a product, since that is equivalent to power set.   Crucially, it proves every set $A$ has a set $\mathrm{Fin}(A)$ of all finite subsets~\citep{McLHigherorder}.   So for any set $B$ and any finite set $A$ there is a set of all functions \clarrow{A}{B}, since the graph of such a function is a finite subset of \cprod{A}{B}.

We extend ZF[$0$] to ZFG[$0$] by positing a global well-ordering of sets.  That is a linear order $y\leq_{\gamma}\! z$ on sets, such that every non-empty class has a $\leq_{\gamma}$-minimal element and the replacement axiom scheme includes formulas using this relation.  Constructibility along the lines of \citet[pp.~272ff.]{SimpsonBook}\ interprets ZFG[$0$] in second order arithmetic $Z_2$.

All rings in this paper are commutative with unit.

\begin{theorem}\label{T:coproducts}Every set of modules $\{M_i|i\in I\}$ on any ring $R$ has a coproduct $($direct sum$)$ $\coprod_{i\in I}M_i$. 
\end{theorem}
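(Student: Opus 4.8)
The plan is to build the coproduct concretely as a set of finite partial functions, so that the only set-existence principle used beyond the elementary operations of ZF[$0$] is the existence of $\mathrm{Fin}(A)$. The point is that an element of a direct sum has finite support and is therefore described by a finite amount of data: one never needs the set of all functions from $I$ to $\bigcup_{i\in I}M_i$, which ZF[$0$] does not provide in general. The index set $I$ itself may be arbitrary; only the support is constrained to be finite.

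First I would form the disjoint union $D=\coprod_{i\in I}(\cprod{\{i\}}{M_i})$, which ZF[$0$] supplies by union and replacement, and then pass to $\mathrm{Fin}(D)$. Inside $\mathrm{Fin}(D)$, separation carves out the subset $S$ of those finite sets $F$ which are graphs of partial functions — at most one pair $(i,m)\in F$ for each $i\in I$ — and which are moreover reduced, in the sense that $m\neq 0_{M_i}$ whenever $(i,m)\in F$. This $S$ will be the underlying set of the coproduct. Each $F\in S$ represents the finitely supported family sending $i\in I$ to the unique $m$ with $(i,m)\in F$, or to $0_{M_i}$ if there is no such pair; distinct members of $S$ represent distinct families, so no quotient by an equivalence relation is needed.

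Next I would put the module structure on $S$. Given $F,G\in S$, their sum is obtained by adding, for each $i$ in the finite union of the supports of $F$ and $G$, the corresponding values in $M_i$ (a missing value counted as $0_{M_i}$), and then discarding any pair whose second coordinate is zero; since the union of two finite sets is finite, the result again lies in $S$. Scalar multiplication by $r\in R$ acts by multiplying each recorded value and discarding resulting zeros. The module axioms transfer from the $M_i$ by a routine verification, and the graphs of these operations are again finite-set-valued functions, hence sets in ZF[$0$]. The coprojections \clarrow{M_i}{S} send $m$ to $\{(i,m)\}$ for $m\neq 0_{M_i}$ and $0_{M_i}$ to $\emptyset$, and these are evidently $R$-linear.

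Finally I would check the universal property. Given a module $N$ and homomorphisms \clarrow{M_i}{N} for $i\in I$, define \clarrow{S}{N} by sending $F\in S$ to the sum in $N$ of the images of its recorded values under the respective maps; this is a finite sum, well defined by associativity and commutativity of addition in $N$ and independent of any enumeration of $F$. One then checks it is $R$-linear, commutes with the coprojections, and is the only such map, its value on any $F$ being forced by additivity together with the commuting triangles. I do not expect a hard argument anywhere; the one thing to watch is the bookkeeping point that every set produced along the way — $D$, $\mathrm{Fin}(D)$, the separated subset $S$, the graphs of addition and scalar multiplication, and the graph of the map into $N$ — is obtained using only union, replacement, separation, and $\mathrm{Fin}$, never a power set or an arbitrary set of functions. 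Finite support is exactly what keeps all the relevant data inside $\mathrm{Fin}(D)$ and finitely many copies of $N$, so this goes through in ZFG[$0$].
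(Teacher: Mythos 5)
Your proof is correct, and it rests on exactly the same key observation as the paper's: an element of the direct sum is a finite amount of data, so the whole construction lives inside $\mathrm{Fin}$ of a single set that ZF[$0$] provides, with no power set or function set needed. The one genuine difference is in the implementation. The paper forms the set of finite subsets of \cprod{R}{\cup_iM_i}, viewed as finite formal sums $\sum r_j\cdot y_j$, and then takes a quotient identifying two finite sets ``if they intuitively have the same sum'' --- an equivalence relation that is definable (compare the componentwise sums in each $M_i$) but is left informal. You instead select canonical representatives from the start: reduced finite partial functions on the disjoint union $\coprod_i(\cprod{\{i\}}{M_i})$, with zero values discarded, so that no quotient is needed and distinct elements of your set $S$ already denote distinct families. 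Your version buys a cleaner verification of the universal property and sidesteps both the vagueness of ``same sum'' and the potential collision of elements in the plain union $\cup_iM_i$; the paper's version is shorter and closer to the free-module-modulo-relations picture. Both go through in ZF[$0$] for the same reason, namely the availability of $\mathrm{Fin}(A)$ together with union, replacement, separation, and quotients by equivalence relations.
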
 
\begin{proof}  Think of the product \cprod{R}{\cup_iM_i} as the set of formal products $r\cdot y$ for $y$ in any $M_i$.  Take the quotient of the set of finite subsets of that product, identifying two finite sets if they intuitively have the same sum. The obvious addition and multiplication rules make this quotient $\coprod_{i\in I}M_i$.
\end{proof}

As equivalence relations have quotients so submodules \clmarrow{N}{M} have quotient modules $M/N$\! and $R$-modules $M,N$ have tensor products $M \otimes_R N$\!.  The tensor product of $R$-algebras is an $R$-algebra.  For every ring $R$ the category of $R$-modules is cocomplete: every set-sized diagram of $R$-modules has a colimit.  So ZFG[$0$] proves the category of $R$-modules is an AB5 category \citep{Tohoku} with the crucial exception of local smallness.  And in general it does not prove there is a small generator.   We will see it proves the same for many kinds of sheaves of modules on Noetherian schemes.   Finiteness theorems in ZFG[$0$] will show in certain cases that certain function sets and certain sheaves do provably exist.  From there, the argument of  \citet{Tohoku} shows the corresponding sheaf categories have enough injectives and have derived functor cohomology.

The Noetherian condition works as usual:

\begin{theorem}For any ring $R$ the following are equivalent:
\begin{enumerate}
      \item Every ideal of $R$ is finitely generated.
      \item Every increasing chain of ideals is finite.
     \item Every set of ideals of $R$ has maximal elements.
 \end{enumerate}
\end{theorem}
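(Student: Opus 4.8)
The plan is to run the familiar cycle $(1)\Rightarrow(2)\Rightarrow(3)\Rightarrow(1)$, checking at each step that the sets and functions it invokes are ones ZFG[$0$] actually supplies; read $(2)$ as the ascending chain condition, that there is no sequence of ideals $I_0\subsetneq I_1\subsetneq\cdots$ indexed by $\omega$. For $(1)\Rightarrow(2)$, given any ascending sequence $I_0\subseteq I_1\subseteq\cdots$, form the union $I=\bigcup_n I_n$, which ZF[$0$] proves is a set and which is plainly an ideal. By $(1)$ there is a finite generating set $\{a_1,\dots,a_k\}\subseteq I$, and each $a_j$ lies in some $I_{n_j}$, so all the $a_j$ lie in $I_N$ for $N=\max_j n_j$; hence $I=I_N$, the sequence is constant from $N$ onward, and in particular no strictly increasing $\omega$-sequence of ideals exists.

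For $(2)\Rightarrow(3)$, argue contrapositively: suppose a set $\mathcal{S}$ of ideals has no maximal element. The restriction of the global well-ordering $\leq_{\gamma}$ to $\mathcal{S}$ is a set well-ordering, so ``the $\leq_{\gamma}$-least ideal in $\mathcal{S}$ strictly containing a given $J\in\mathcal{S}$'' is a well-defined total operation on $\mathcal{S}$, since no $J\in\mathcal{S}$ is maximal. Fixing any $I_0\in\mathcal{S}$ and iterating this operation by recursion along $\omega$ -- the ordinary recursion theorem, which holds in ZF[$0$] -- yields a strictly increasing $\omega$-sequence of ideals, contradicting $(2)$. This is the one place where dependent choice would classically be invoked, and here the global well-ordering of ZFG[$0$] stands in for it.

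For $(3)\Rightarrow(1)$, fix an ideal $I$ and let $\mathcal{F}$ be the family of all finitely generated ideals $(a_1,\dots,a_k)$ with every $a_i\in I$. We cannot form ``the set of all ideals'' in ZFG[$0$], but $\mathcal{F}$ is the image of the set $\mathrm{Fin}(I)$ of finite subsets of $I$ under the map carrying a finite subset to the ideal it generates, so $\mathcal{F}$ is a set by replacement. By $(3)$ it has a maximal element $J=(a_1,\dots,a_k)$; if $J\neq I$, choose $b\in I\setminus J$ and observe that $(a_1,\dots,a_k,b)\in\mathcal{F}$ properly contains $J$, a contradiction. Hence $I=J$ is finitely generated.

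The work here is foundational bookkeeping rather than a new idea, and I expect no genuine obstacle. The two points deserving attention are the appeal to $\mathrm{Fin}(I)$ to make the family of finitely generated subideals a set in $(3)\Rightarrow(1)$, and the substitution of the global well-ordering for dependent choice in $(2)\Rightarrow(3)$; with those noted, the remaining content is verbatim the classical argument.
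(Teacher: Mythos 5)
Your proof is correct and is essentially the paper's proof: the paper simply remarks that the familiar cycle $1\Rightarrow 2\Rightarrow 3\Rightarrow 1$ uses no power sets, and you have written out exactly that cycle. The two foundational points you flag---using $\mathrm{Fin}(I)$ plus replacement to make the family of finitely generated subideals a set, and using the global well-ordering of ZFG[$0$] in place of dependent choice in $(2)\Rightarrow(3)$---are precisely the checks the paper's one-line proof leaves implicit.
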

\begin{proof} Familiar proofs of $1\Rightarrow 2\Rightarrow 3\Rightarrow 1$ use no power sets.  
\end{proof}

The finiteness condition implies an existence theorem in ZFG[$0$]:

\begin{theorem}\label{T:ideals}Every Noetherian ring $R$ has a set $\mathrm{Id}(R)$ of all ideals, and a set $\mathrm{Spec}(R)$ of all prime ideals.
\end{theorem}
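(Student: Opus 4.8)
The plan is to realize the class of all ideals of $R$ as the image of the set $\mathrm{Fin}(R)$ under a definable class function, and then apply replacement; the Noetherian hypothesis is exactly what makes that image contain \emph{every} ideal. No use of the global well-ordering is needed, and indeed none should be, since the statement concerns the single fixed set $R$.

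First I would check that for each finite subset $S\subseteq R$ the ideal $(S)$ it generates is a set, uniformly definable in $S$. Since $S$ is finite, the function set $R^{S}$ exists in ZF[$0$] (each such function has a graph lying in $\mathrm{Fin}(\cprod{S}{R})$, so $R^{S}$ is carved out of that set by separation). The sum map $\mathrm{ev}_{S}\colon R^{S}\to R$, $f\mapsto\sum_{s\in S}f(s)\cdot s$, is a definable function whose range is precisely $(S)$; hence $(S)=\{x\in R\mid \exists f\in R^{S}\ x=\mathrm{ev}_{S}(f)\}$ is a set by separation on $R$, and $S\mapsto(S)$ is a definable class function with domain the set $\mathrm{Fin}(R)$.

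Replacement then produces the set $\mathrm{Id}(R):=\{(S)\mid S\in\mathrm{Fin}(R)\}$. Each element of $\mathrm{Id}(R)$ is an ideal of $R$; conversely every ideal of $R$, being finitely generated by the Noetherian hypothesis, equals $(S)$ for $S$ any finite generating set, so $\mathrm{Id}(R)$ is the set of \emph{all} ideals. Finally ``$P$ is prime'' is the first-order condition that $P\in\mathrm{Id}(R)$, $1\notin P$, and for all $a,b\in R$, $ab\in P$ implies $a\in P$ or $b\in P$; so $\mathrm{Spec}(R)=\{P\in\mathrm{Id}(R)\mid P\text{ is prime}\}$ exists by separation.

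I do not expect a real obstacle: the only bookkeeping is verifying that $\mathrm{ran}(\mathrm{ev}_{S})$ is definable uniformly in $S$, which the existence of $R^{S}$ makes routine. The content of the theorem is conceptual rather than technical — without finite generation the ideals of $R$ can form a class as wide as a power set and need not be a set at all, whereas finite generation compresses them into the replacement-image of $\mathrm{Fin}(R)$.
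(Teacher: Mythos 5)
Your proposal is correct and follows the paper's own argument: the paper likewise obtains $\mathrm{Id}(R)$ as the replacement-image of $\mathrm{Fin}(R)$ under $S\mapsto(S)$, using the Noetherian hypothesis to see that this catches every ideal, and then cuts out $\mathrm{Spec}(R)$ by separation. Your extra verification that $(S)$ is a set via the function set $R^{S}$ is just a more explicit rendering of the paper's remark that ``every finite subset of $R$ determines an ideal.''
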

\begin{proof}Every finite subset of $R$ determines an ideal so by replacement there is a set of all finitely generated ideals.  For  Noetherian $R$ these are all the ideals.
\end{proof}

The usual criteria work since the usual proofs use no power sets:
\begin{theorem}\
\begin{enumerate}
    \item $\mathbb{Z}$ is Noetherian, as is every field.
    \item Every finitely generated algebra over a Noetherian ring is Noetherian.
    \item Every localization of a Noetherian ring is  Noetherian.
\end{enumerate}
\end{theorem}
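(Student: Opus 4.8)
The plan is to run, for each clause, the textbook argument while checking that it only ever forms finitely generated ideals, finite subsets, and set-indexed families, so that separation, replacement, and the $\mathrm{Fin}(-)$ axiom of ZF[$0$] suffice, and that wherever a ``choose an element'' step is needed the global well-ordering of ZFG[$0$] supplies it; throughout I use the equivalence theorem above, which gives Noetherian $=$ ``every ideal finitely generated'' $=$ the ascending chain condition, with no power set. Clause (1) is finitary: for $\mathbb{Z}$ an ideal $I$ is either $(0)$ or $(d)$ with $d=\min\{n>0\mid n\in I\}$ (least element by the well-ordering of $\mathbb{N}$, not the global one), by Euclidean division; for a field the only ideals are $(0)$ and $(1)$. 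Clause (3): for Noetherian $R$ and multiplicative $S\subseteq R$, the localization $S^{-1}R$ exists as the quotient of \cprod{R}{S} by the usual equivalence relation, and equivalence relations have quotients in ZF[$0$]; given an ideal $J\subseteq S^{-1}R$, its contraction $I=\{r\in R\mid r/1\in J\}$ exists by separation, is an ideal of $R$, hence is finitely generated by some $a_1,\dots,a_k$, and one checks $J=S^{-1}I$ is generated over $S^{-1}R$ by $a_1/1,\dots,a_k/1$.

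For clause (2) it suffices to treat $R[x]$: the case $R[x_1,\dots,x_n]$ follows by induction --- the polynomial ring exists, being as an $R$-module the coproduct $\coprod_{\mathbb{N}^n}R$ of Theorem~\ref{T:coproducts}, equivalently the set of finitely supported functions $\mathbb{N}^n\to R$, a subset of $\mathrm{Fin}(\cprod{\mathbb{N}^n}{R})$ --- and a finitely generated $R$-algebra is a quotient $R[x_1,\dots,x_n]/K$ whose ideals correspond bijectively, preserving finite generation, to the ideals of $R[x_1,\dots,x_n]$ containing $K$. For $R[x]$ I would run the Hilbert Basis Theorem: given an ideal $J\subseteq R[x]$, let $L_d\subseteq R$ be the ideal of leading coefficients of degree-$d$ members of $J$, together with $0$; each $L_d$ is obtained by separation, the assignment $d\mapsto L_d$ is a set by replacement, and $L_0\subseteq L_1\subseteq\cdots$ is an increasing chain of ideals of $R$, hence stabilizes at some $L_N$ since $R$ satisfies the ascending chain condition. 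Each $L_d$ for $d\le N$ is generated by the leading coefficients of finitely many $f_{d,i}\in J$, and the finite set $\{f_{d,i}\mid d\le N\}$ generates $J$ by the standard degree-reduction argument; no power set is formed anywhere.

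The main obstacle is not any ring-theoretic step --- all three are classical --- but the bookkeeping that keeps each auxiliary object set-sized and power-set-free: confirming that the chain $d\mapsto L_d$ in the Hilbert argument is genuinely a set, that the set of finitely generated subideals used (via the equivalence theorem) to extract generators comes from $\mathrm{Fin}(-)$ by replacement rather than from a power set, and that the quotient correspondence of ideals and the localization both go through on ZF[$0$] alone, with the global well-ordering of ZFG[$0$] invoked only where an element must actually be selected.
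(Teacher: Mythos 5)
Your proposal is correct and matches the paper's approach: the paper's entire proof is the remark that ``the usual criteria work since the usual proofs use no power sets,'' and you simply carry out those usual proofs (Euclidean division, the Hilbert Basis Theorem via the stabilizing chain of leading-coefficient ideals, and contraction of ideals for localization) while verifying the set-existence bookkeeping. Nothing in your argument diverges from what the paper intends.
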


\subsection{Independence results}\label{S:independence}
Scheme theory in ZFG[$0$] is largely limited to the Noetherian case because ZFG[$0$] does not prove every ring $R$ has a set of all ideals or of all prime ideals.  And even for Noetherian $R$ it does not prove every set of $R$-modules has a direct product, so sheaf constructions need to use finite covers (cf. Theorem~\ref{T:basis}).  These are because ZFG[$0$] has an inner model of countable sets.  The ring of integer polynomials in countably many variables $x_i,i\in \mathbb{N}$ exists by Theorem~\ref{T:coproducts} yet has uncountably many prime ideals.  Every infinite product of non-zero modules is uncountable so it does not exist in the inner model.

\section{The Baer construction for Noetherian rings}\label{S:Baer}

We show in ZFG[$0$] every module on a ring $R$ embeds in an injective $R$-module:

\begin{lemma}A module $M$ on any ring $R$ is injective if every $R$-linear map from an ideal \clarrow{I}{M} extends to an $R$-linear map from all of $R$ to $M$\!.
\end{lemma}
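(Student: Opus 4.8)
The plan is to follow the classical proof of Baer's criterion almost verbatim, the one adjustment being that its use of Zorn's lemma is replaced by an explicit transfinite recursion along the global well-ordering $\leq_\gamma$. This matters because the poset of partial extensions occurring in the usual argument is assembled from the submodules of a module, and such a collection need not be a set in ZFG[$0$], so Zorn's lemma is unavailable in the shape one would like; whereas transfinite recursion, and the choices it requires, are exactly what $\leq_\gamma$ supplies. (The converse, that an injective $M$ has the stated extension property, is trivial, since every ideal is a submodule of $R$.)

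First I would unwind injectivity: $M$ is injective iff whenever $A \to B$ is a monomorphism of $R$-modules and $f\colon A \to M$ is $R$-linear, $f$ factors through it. Replacing the monomorphism by the inclusion of its image, it suffices to treat the case where $A$ is a submodule of $B$ and $f$ is to be extended to an $R$-linear map $B \to M$. So fix such $A \subseteq B$ and $f$.

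Next, well-order $B$ by $\leq_\gamma$ and define by transfinite recursion an increasing chain of submodules $A_\xi \subseteq B$ together with compatible $R$-linear maps $f_\xi\colon A_\xi \to M$. Put $A_0 = A$ and $f_0 = f$; at a limit $\lambda$ take $A_\lambda = \bigcup_{\xi<\lambda} A_\xi$ and $f_\lambda = \bigcup_{\xi<\lambda} f_\xi$; at a successor stage, stop if $A_\xi = B$, and otherwise let $b$ be the $\leq_\gamma$-least element of $B \setminus A_\xi$, form the ideal $I = \{\, r \in R \mid rb \in A_\xi \,\}$, observe that $r \mapsto f_\xi(rb)$ is an $R$-linear map $I \to M$, invoke the hypothesis to obtain an $R$-linear extension $h\colon R \to M$ (take the $\leq_\gamma$-least such $h$, to keep the recursion definable), and set $A_{\xi+1} = A_\xi + Rb$ with $f_{\xi+1}(a + rb) = f_\xi(a) + r\,h(1)$. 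The single thing to verify is that $f_{\xi+1}$ is well defined, independent of the representation $a+rb$: this is the usual short computation, using that $a+rb = a'+r'b$ forces $r-r' \in I$, whence $f_\xi((r-r')b) = h(r-r') = (r-r')h(1)$.

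Finally I would establish termination. Whenever the recursion has not yet stopped, the $\leq_\gamma$-least element $b$ of $B\setminus A_\xi$ lies in $A_{\xi+1}$, and since the $A_\xi$ are increasing this element is a strictly $\leq_\gamma$-increasing function of $\xi$; as $(B,\leq_\gamma)$ is a well-order, this cannot persist through all ordinals, so by replacement there is a stage with $A_\xi = B$, and that $f_\xi$ is the desired extension of $f$. Everything used — transfinite recursion on ordinals, the appeals to $\leq_\gamma$ to select $b$ and $h$, the fact that each $A_\xi$ is literally a subset of $B$ and each $f_\xi$ a subset of the product $B \times M$, and the replacement step bounding the recursion length — stays inside ZFG[$0$]. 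The only genuine obstacle relative to the textbook proof is precisely this substitution of recursion along $\leq_\gamma$ for Zorn's lemma; the module-theoretic content is unchanged.
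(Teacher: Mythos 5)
Your proposal is correct and is essentially the paper's own argument: the paper simply cites Baer's classical proof "using global choice to avoid needing a set of all partial functions," and your transfinite recursion along $\leq_\gamma$ (selecting the least new element $b$ and the least extension $h$ at each stage) is exactly the fleshed-out version of that remark. The well-definedness computation and the termination-by-replacement step are both sound in ZFG[$0$].
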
 

\begin{proof} Essentially the proof from \cite{BaerAb}, using global choice to avoid needing a set of all partial functions from one module $N$ to another $M$\!.
\end{proof}

If $R$ is Noetherian an $R$-linear map \arrow{i}{I}{M} from any ideal $I$ to any module $M$ is determined by its values on some finite set of generators of $I$\!.  That is, the $R$-linear maps are determined by suitable finite subsets of the product \cprod{I}{M}\!.  By replacement, then, for each ideal $I$ and module $M$ there is a set $(I,M)$ of all $R$-linear maps from $I$ to $M$\!.  And for each $M$ there is a set of all $(I,M)$ as $I$ varies over ideals of $R$. 

Form the coproduct of one copy of $M$ plus a copy $R_i$ of $R$ for each ideal $I$ and map $i\in (I,M)$.  Take the quotient of this product identifying the ideal $I$ in any factor $R$ with its image in $M$ by the corresponding map:
  \[  M_1 =  \big(\cprod{ M}{\coprod_{ (I,M)}     R_i}\,\big)\big/\big(\{\langle m,x_i\rangle| i(x)=m\}\big) \]
Take the obvious inclusion of the first factor \clmarrow{M}{M_1}.  

In general $M_1$ is not injective.  Rather, for any ideal $I$ every $R$-linear map \clarrow{I}{M} extends to an $R$-linear map \clarrow{R}{M_1}.  Baer (amended by \citet[p.~10]{CartEil}) iterated this procedure out to $M_{\alpha}$\! for $\alpha$ the successor cardinal of $\mathrm{Card}(R)$.  Cofinality shows every map from an ideal to $M_{\alpha}$ is actually into some smaller $M_{\beta}$ and so $M_{\alpha}$ is injective.   For $R$ Noetherian we merely need $\alpha$ greater than any finite number.  I.e.~$\alpha=\omega$ suffices. 

\begin{theorem}There is a chain of $M_k$\! for all $k\in \omega$.  Its union is injective and embeds $M$\!.
     \[ \clmarrow{M}{M_{\omega}} = \bigcup_{i\in \omega} M_i \]
\end{theorem}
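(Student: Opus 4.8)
The plan is to build the chain by recursion on $\omega$, observe that its union is a well-defined $R$-module containing $M$, and then verify Baer's criterion (the Lemma) for $M_\omega$, using the Noetherian hypothesis to run the cofinality argument at stage $\omega$ rather than at some large successor cardinal.

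First I would set $M_0 = M$ and $M_{k+1} = (M_k)_1$, where $N \mapsto N_1$ is the explicit operation described above: it uses only a coproduct (Theorem~\ref{T:coproducts}), the set $(I,N)$ of $R$-linear maps out of each ideal $I$ — a set because $R$ is Noetherian, so each such map is determined by a finite subset of $\cprod{I}{N}$, followed by replacement — and a quotient by an equivalence relation. Hence $N\mapsto N_1$, together with its structure map \clmarrow{N}{N_1}, is given by a class term, and recursion on $\omega$, available from replacement (with formulas allowed to mention $\leq_{\gamma}$), produces the sequence $k\mapsto M_k$ and the connecting monomorphisms. Taking $M_\omega$ to be the colimit of the chain $\clmarrow{M_0}{M_1},\clmarrow{M_1}{M_2},\dots$ — which exists as a set since the category of $R$-modules is cocomplete — or, after the routine adjustment making each $M_k$ literally a submodule of $M_{k+1}$, the union $\bigcup_{k\in\omega}M_k$, we get an $R$-module in which any two elements lie in a common stage, where their sum and scalar multiples are computed. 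The map \clmarrow{M}{M_\omega} is the canonical one from $M_0$, and it is injective because each \clarrow{M_0}{M_k} is.

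Next I would verify injectivity through the Lemma: it suffices that every $R$-linear \arrow{f}{I}{M_\omega} from an ideal $I$ extends along \clmarrow{I}{R}. Since $R$ is Noetherian, $I=(a_1,\dots,a_n)$ is finitely generated, so the finitely many values $f(a_1),\dots,f(a_n)$ all land in one stage $M_k$ (the maximum of their indices); as $M_k$ is a submodule of $M_\omega$, each $f(\sum_j r_j a_j)=\sum_j r_j f(a_j)$ lies in $M_k$, so $f$ factors through $M_k$ via an $R$-linear \arrow{f'}{I}{M_k}. By the defining property of the Baer step, $f'$ extends to an $R$-linear \arrow{g}{R}{M_{k+1}}, and composing $g$ with \clmarrow{M_{k+1}}{M_\omega} yields an $R$-linear \clarrow{R}{M_\omega} whose restriction to $I$ is $f$, by compatibility of the connecting maps. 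The Lemma then gives that $M_\omega$ is injective.

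I expect the step absorbing $f$ into a single stage $M_k$ to be the crux, and it is exactly here that being Noetherian is indispensable: for a general ring the generators of $I$ form only a set, so $f$ could genuinely use cofinally many stages, forcing the iteration out to $M_\alpha$ for $\alpha$ the successor cardinal of $\mathrm{Card}(R)$ and a real cofinality argument there — a transfinite construction ZFG[$0$] cannot carry out, and cannot even form as a set inside its inner model of countable sets. Finite generation of every ideal replaces that cardinal by $\omega$, since then only finitely many values $f(a_j)$ must be captured. A minor point worth noting in passing is that $N\mapsto N_1$ is genuinely given by a formula — so that the recursion is legitimate — rather than depending on arbitrary choices, which is clear from the explicitness of the coproduct and quotient constructions; global choice enters the whole development only through the Lemma itself.
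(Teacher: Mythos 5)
Your proposal is correct and follows essentially the same route as the paper: build the chain by recursion using the definability of the Baer step and replacement, form the union as a quotient of the sum, and use finite generation of ideals (Noetherianness) to land any map \clarrow{I}{M_{\omega}} in a single stage $M_k$, whence it extends via $M_{k+1}$. The extra care you take about the recursion being given by a class term and about where global choice enters is a faithful elaboration of what the paper leaves implicit.
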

\begin{proof}By induction each term of the chain exists with a definable embedding into the next.  By replacement the chain forms a set.  The chain union is a quotient of the sum set.  Each ideal $I$ is finitely generated so any $R$-linear map \clmarrow{I}{M} maps a set of generators into some $M_k$ and so maps all of $I$ into that.  By construction that map extends to one from all of $R$ to $M_{k+1}$\! and so to the union.
\end{proof}

So each module $M$ embeds in an injective \clmarrow{M}{M_{\omega}}.  For our purpose rename $M_{\omega}=I_1$.  The quotient module $I_1/M$ embeds in an injective $I_2$ and so on.  Replacement gives an infinite injective resolution~\citep[Appendix~3]{EisComm}:
  \[  \xymatrix{ 0 \ar[r] & M \ar[r] & I_0 \ar[r] & I_2 \ar[r] & \dots \ar[r] & I_i \ar[r] & \dots &  i\in \mathbb{N} } \]
The standard proofs work in ZFG[$0$] to show each definable left exact functorial operation $F$ from $R$-modules to Abelian groups has a definable \emph{right derived functor}.  That means each short exact sequence of $R$-modules  

   \[   \xymatrix{ 0 \ar[r] & M' \ar[r] & M \ar[r] &M'' \ar[r] &  0 } \]
yields a long exact sequence of Abelian groups with the usual naturality. 
\begin{multline*}  \hspace*{5ex} 0 \rightarrow  M' \rightarrow  M \rightarrow M'' \rightarrow  
             R^1F(M')   \rightarrow  R^1F(M) \rightarrow 
     \\  R^1 F (M'') \rightarrow  R^2 F(M')  \rightarrow  R^2  F(M) \rightarrow \dots \hspace*{5ex}
\end{multline*}

This gives all the general theorems of derived functor cohomology for modules over Noetherian rings.  Textbooks, though, generally ignore the foundational issue.  In ZFC and ZFG[$0$] alike the category of modules over a ring is a proper class and not a legitimate entity.   These foundations quantify over modules and sequences of modules; and treat functors only as a shorthand for definable functorial operations.

\section{Noetherian schemes and cohomologies}\label{S:noetschemes}
In the absence of power sets we stipulate that a topological space is a set with a \emph{set} of open subsets meeting the familiar conditions.  So a sheaf is necessarily a set:  if each open subset $U$ of a given space has an associated value $\mathscr{F}(U)$ then by replacement there is a set of all those values.  And so for any point $x\in S$ of a topological space, and sheaf $\mathscr{F}$ on $S$\!, there is a stalk $\mathscr{F}_x$.  That is the colimit of all values $\mathscr{F}(U)$ with $x\in U$\!.

As to schemes, ZFG[$0$] proves existence of any scheme patched together from finitely many spectra of Noetherian rings, but the force of this theorem depends on which Noetherian rings exist (i.e.~provably exist or are assumed to exist for any given purpose).  It does not prove existence of the uncountable $\mathbb{R,C,Z}_p,\mathbb{Q}_p$ though it proves conditional theorems taking their existence as hypotheses.  It does prove existence of all rings finitely generated over the integers $\mathbb{Z}$, rationals $\mathbb{Q}$, finite fields $\mathbb{F}_{p^n}$\!, and their algebraic closures $\overline{\mathbb{Q}},\, \overline{\mathbb{F}}_p$.

More fully the \emph{spectrum} of any ring $R$ is a topological space with the set $\mathrm{Spec}(R)$ of prime ideals of $R$ as set of points, and a closed subset $V(I)$ for each ideal $I$ of $R$.  Intuitively $V(I)$ is the set of points defined by equating each element of $I$ to $0$, and formally it is the set of all prime ideals which contain $I$\!.  Open subsets are the complements of closed subsets.

So every open subset of $\mathrm{Spec}(R)$ is a union of \emph{distinguished} opens $D(f)$ for $f\in R$, where $D(f)$ is the set of all prime ideals not containing $f$\!. The \emph{coordinate ring} on any closed subset $V(I)$ is the quotient ring $R/I$\!, and on any distinguished open $D(f)$ it  is the localized ring $R_f$\!.  The coordinate ring $\mathcal{O}(U)$ on an arbitrary open subset $U\subseteq \mathrm{Spec}(R)$ is a subring of the product of the coordinate rings on any cover of $U$\! by distinguished opens~\citep[p.~76]{HartAG}.

In general ZFG[$0$] does not prove these things exist.  But for Noetherian $R$ it proves $\mathrm{Spec}(R)$ exists and has a \emph{Noetherian topology}.  Every open subset is compact.  Notably, every open subset is a finite union of distinguished opens so coordinate rings on all opens exist as subrings of finite products of the rings $R_f$\!.

 A central theorem of sheaf theory must be limited to Noetherian spaces here:

\begin{theorem}\label{T:basis}  
For any base $\mathscr{B}$ of the topology of a Noetherian space $X$\!:  Given a set $\mathscr{F}(U)$ for each open subset $U\in \mathscr{B}$, and a restriction  map \clarrow{\mathscr{F}(U)}{\mathscr{F}(U')} for each inclusion $U'\subseteq U$ of basis opens, if the data satisfy the sheaf conditions so far as they are defined then they extend to determine a unique sheaf $\mathscr{F}$\!.
\end{theorem}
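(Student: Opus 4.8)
The plan is to run the classical extension of a sheaf off a base, but to force every value of the extension to be a \emph{finite} limit rather than the usual inverse limit over all basis opens below a given open — that inverse limit would require a product over a set that ZFG[$0$] need not provide. The Noetherian hypothesis is exactly what makes this possible: every open $U\subseteq X$ is a finite union of basis opens, every pairwise intersection of basis opens is again open and hence again a finite union of basis opens, and so a section over $U$ can be coded by a tuple indexed by a \emph{finite} family of basis opens, subject to \emph{finitely} many agreement conditions each tested on a single basis open. To turn $U\mapsto\mathscr{F}(U)$ into an honest function — so that replacement can collect it into a set, i.e.\ into the sheaf — I will use the global well-ordering $\leq_{\gamma}$ to pin down a canonical finite cover of each open.

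Concretely, for each open $U$ let $\beta(U)$ be the $\leq_{\gamma}$-least finite subset of $\mathscr{B}$ that covers $U$; such finite subsets form a set by $\mathrm{Fin}(\mathscr{B})$, and by compactness of $U$ a nonempty one, so a least member exists. For $U\in\mathscr{B}$ take $\beta(U)=\{U\}$. For $B,B'\in\beta(U)$ the intersection $B\cap B'$ is open, so it too has a canonical finite cover $\beta(B\cap B')\subseteq\mathscr{B}$, every member of which lies in both $B$ and $B'$. Define
\[
\mathscr{F}(U)\;=\;\Bigl\{\,(s_B)_{B\in\beta(U)}\in\textstyle\prod_{B\in\beta(U)}\mathscr{F}(B)\;\Bigm|\;s_B|_W=s_{B'}|_W\ \text{for all }B,B'\in\beta(U)\text{ and all }W\in\beta(B\cap B')\,\Bigr\},
\]
a subset of a product over a finite index set, hence a set. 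For an inclusion $U'\subseteq U$ define the restriction $\mathscr{F}(U)\to\mathscr{F}(U')$ thus: given $(s_B)\in\mathscr{F}(U)$ and a member $B'\in\beta(U')$, the basis opens contained in some $B'\cap B$ with $B\in\beta(U)$ form a finite cover of $B'$ (finite because $\beta(U)$ is finite and each $B'\cap B$ is compact), the sections $s_B$ restricted to these basis opens agree on basis opens below their overlaps because $(s_B)$ does, and so the sheaf conditions for $\mathscr{F}$ on $\mathscr{B}$ — which hold \emph{so far as they are defined}, and here every set in sight is a value on a basis open — glue them to a unique element of $\mathscr{F}(B')$; these are the coordinates of the image tuple.

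It then remains to verify, by the standard arguments which here use only \emph{finite} gluings and the given base data: (i) these restriction maps are functorial, $(\,\cdot\,)|_{U''}=((\,\cdot\,)|_{U'})|_{U''}$, which follows from the separation half of the base conditions applied to a common basis refinement of the relevant opens; (ii) for $U\in\mathscr{B}$ one has $\beta(U)=\{U\}$, so the equalizer above \emph{is} the given $\mathscr{F}(U)$ and, by separation again, the restriction maps reduce to the given ones — so $\mathscr{F}$ genuinely extends the data; (iii) the value $\mathscr{F}(U)$ is, canonically, the same equalizer for \emph{any} finite cover of $U$ by basis opens, not just $\beta(U)$ — proved by comparing two such covers through a common basis refinement, again finite by compactness — and granting this the sheaf axioms hold: an arbitrary open cover of an open $U$ has a finite subcover since $U$ is compact, this refines to a finite cover by basis opens, and the axioms for that cover unwind to the base conditions; (iv) any sheaf extending the base data must, for each open $U$, be the equalizer attached to the cover $\beta(U)$, hence is canonically isomorphic to the $\mathscr{F}$ just built. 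Finally, since the open subsets of $X$ form a set, replacement collects the $\mathscr{F}(U)$ together with their restriction maps into a set — the sheaf $\mathscr{F}$.

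The one point requiring genuine care — and the whole content of the theorem — is that every $\mathscr{F}(U)$ stay built from \emph{finitely} many of the given sets $\mathscr{F}(B)$. Compactness of $X$ handles the cover of $U$ itself, but one must also keep noticing that the overlap conditions — the intersections $B\cap B'$ here, and the intersections of members of a general finite cover in step (iii) — are again open and hence again finitely covered by basis opens, so that no infinite product is ever invoked. Organizing this nest of finite covers canonically, via $\leq_{\gamma}$, so that $\mathscr{F}$ is literally a set rather than merely something defined up to isomorphism, is the delicate bookkeeping; once it is set up, each verification is the familiar one and survives intact in ZFG[$0$].
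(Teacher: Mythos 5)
Your proposal is correct and takes essentially the same approach as the paper, whose proof is simply the remark that the usual construction via products $\prod_i\mathscr{F}(U_i)$ over basis covers goes through once all covers are taken finite, which compactness of opens in a Noetherian space permits. Your additional bookkeeping with the global well-ordering $\leq_{\gamma}$ to fix canonical finite covers is a reasonable elaboration of what the paper leaves implicit, not a different route.
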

\begin{proof}  The usual proof forms products $\prod_i(\mathscr{F}(U_i))$ for basis open covers $\{U_i\}$ of arbitrary open subsets.  For Noetherian spaces, use finite covers.
\end{proof}

The yields the standard theory of Noetherian schemes and scheme morphisms and of all sheaves of modules on those schemes.  Of course the issue remains of proving which sheaves of modules exist.

A key fact is that the sheaf associated to a presheaf on a Noetherian space can be constructed as a colimit of finite products using finite covers of open subsets \citep[pp.~230ff.]{VerdierTopologies}.  So colimits of sets lift to cocompleteness of the category of  sheaves of sets on any Noetherian space, and colimits of modules lift to cocompleteness of the category of sheaves of modules on any Noetherian scheme.

For a scheme morphism \arrow{f}{X}{Y} the direct image functor $f_*$ from sheaves of sets on $X$ to those on $Y$ uses no power sets or limits.  The inverse image functor $f^{-1}$ from sheaves of sets on $Y$ to those on $X$ uses sheafification of a colimit.  Then $f_*$ lifts directly to a functor from sheaves of modules on $X$ to those on $Y$, while  $f^{-1}$ lifts to an inverse image functor $f^*$ on sheaves of modules by sheafifying tensor products.  The usual treatments work in ZFG[$0$].

The theory of sheaves of modules in \citet[p.~108--29]{HartAG} works in ZFG[$0$] with one exception: Hartshorne~(p.~109) treats the sets of homorphisms $\mathrm{Hom}(\mathscr{F\!,G})$ and homomorphism sheaves $\mathscr{H}\!om(\mathscr{F\!,G})$ as existing for all sheaves of modules on a given scheme.  This is not provable in ZFG[$0$].  But it is for $\mathscr{F}$ coherent on a Noetherian scheme so each homomorphism is determined by its values on finitely many generating sections of $\mathscr{F}$\!.  We must define coherent sheaves.

Every module $M$ on a Noetherian ring $R$ gives a sheaf of modules $\widetilde{M}$ on $\mathrm{Spec}(R)$  whose value over any distinguished open $U_f$ is the localization $M_f$\!.  This is Theorem~\ref{T:basis} plus straightforward calculation.  A sheaf $\mathscr{F}$ on a scheme $X$ is \emph{quasi-coherent} if $X$ is covered by spectra $\mathrm{Spec}(R)$ such that the restriction of $\mathscr{F}$ to each $\mathrm{Spec}(R)$ is (isomorphic to) $\widetilde{M}$ for some $R$-module $M$\!.  It is \emph{coherent} if each of these $M$ is finitely generated over its $R$.  Hartshorne's proofs on quasi-coherent and coherent sheaves work verbatim in ZFG[$0$] as they consistently uses compactness to reduce questions to finite covers and finite products.

\citet{HartAG} relates three cohomology theories for schemes.  He defines \emph{cohomology} by derived functors on the category of Abelian groups on a topological space~(p.~207).  He quickly relates this to derived functors on the category of modules on the structure sheaf of any scheme~(p.~208).  Third is \v Cech cohomology~(pp.~218ff.) defined by quotients of finite products, so it naturally works in ZFG[$0$].  The problem is to prove the derived functors are well defined, i.e.~to prove in ZFG[$0$] the relevant categories have enough injectives.  Our Section~\ref{S:sheafinj} does this.  

Crucial theorems relating these cohomologies involve infinite \emph{direct limits} but no infinite \emph{inverse limits} in Hartshorne's terminology.  In our terminology they use infiite colimits but no infinite limits.  So they work in ZFG[$0$].  

The chief use of $\mathrm{Hom}(\mathscr{F\!,G})$ and $\mathscr{H}\!om(\mathscr{F\!,G})$ in Hartshorne is Serre duality (pp.~239ff.), a result on coherent sheaves on projective varieties over fields.  The usual proofs naturally work in ZFG[$0$].

\section{Injectives for Zariski sheaves}\label{S:sheafinj}
\citet[p.~207]{HartAG} uses Godement's construction of injective embeddings, which fails in ZFG[$0$] as it uses infinite products.   Rather, we extend Section~\ref{S:Baer}.  

\subsection{Finite generation of sheaves of ideals}
The chief issue is to show in ZFG[$0$] that all ideals of structure sheaves of Noetherian schemes are finitely generated (not only quasi-coherent sheaves of ideals), and the same for all ideals of the constant sheaf of integers on any Noetherian space. 

\subsubsection{Sheaves of modules}\label{S:digraphs}
The problem reduces to the spectra $\mathrm{Spec}(R)$ of Noetherian rings.  For any sheaf of ideals $\mathscr{I}$\! of the structure sheaf $\mathcal{O}_R$ write $I$ for the ideal of global sections.   If $\mathscr{I}$\! is quasi-coherent then $\mathscr{I}\!(D(f))$ is the localization $I_f$ for all distinguished opens $D(f)$, but in any case $I_f\subseteq \mathscr{I}\!(D(f))$.

\begin{definition} For any ring $R$ a \emph{digraph of ideals} is a rooted directed graph with nodes $\langle D(f),K\rangle$ with $K$ an ideal of the localization $R_f$\!.  It must be 
\begin{itemize}
  \item Global: the root is $\langle\mathrm{Spec}(R),I\rangle$ for some ideal $I$\!.
  \item Functional: any open $D(f)$ occurs in at most one node.
  \item Decreasing on opens: an edge \clarrow{\langle D(g),H\rangle}{\langle D(f),K\rangle} implies  
              $D(f)\subsetneq D(g)$.
  \item Increasing on ideals:  an edge \clarrow{\langle D(g),H\rangle}{\langle D(f),K\rangle} implies $K$ properly includes the localization $H_f$ of $H$ to $D(f)$.
\end{itemize}

\end{definition}
\noindent Notice a node $\langle D(f),K\rangle$ is identified by the distinguished open subset $D(f)$ and nothing depends on which function $f$ is chosen to specify it. 

\begin{lemma} Every finite digraph of ideals on any affine scheme $\mathrm{Spec}(R)$ \emph{generates} a sheaf $\mathscr{I}$\! of ideals.
\end{lemma}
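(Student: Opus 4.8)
The plan is to realize $\mathscr{I}$ as the subsheaf of the structure sheaf $\mathcal{O}_R$ generated by the sections supplied by the nodes of the digraph, built by a finite-cover sheafification so that everything stays inside ZFG[$0$]. Throughout I take $R$ Noetherian, so that $\mathrm{Spec}(R)$ exists by Theorem~\ref{T:ideals} and, being a Noetherian space, has every open subset compact. For a node $\langle D(f_\nu),K_\nu\rangle$ and a distinguished open $D(g)\subseteq D(f_\nu)$, write $(K_\nu)_g$ for the ideal of $R_g=\mathcal{O}_R(D(g))$ obtained by extending $K_\nu$ along the localization $R_{f_\nu}\to R_g$; since $R_g$ is again Noetherian this is a finitely generated ideal, and as the concluding remark of the Definition observes it depends only on the open set $D(g)$.

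First I would define a presheaf $\mathcal{J}$ of ideals on the base of distinguished opens by $\mathcal{J}(D(g))=\sum_{\nu:\,D(g)\subseteq D(f_\nu)}(K_\nu)_g\subseteq R_g$, the sum being finite because the digraph is finite; restriction along $D(h)\subseteq D(g)$ carries $\mathcal{J}(D(g))$ into $\mathcal{J}(D(h))$ because every node relevant to $D(g)$ is relevant to $D(h)$. Each value is a finitely generated ideal and there are only set-many opens, so by replacement $\mathcal{J}$ is a set; in general it is only a presheaf, since ideals need not glue. Next I would sheafify it, concretely: over an arbitrary open $U$, declare $s\in\mathcal{O}_R(U)$ to lie in $\mathscr{I}(U)$ exactly when $U$ has a \emph{finite} cover by distinguished opens $D(g_i)$ with $s|_{D(g_i)}\in\mathcal{J}(D(g_i))$ for every $i$. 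Because $\mathrm{Spec}(R)$ is Noetherian, finite covers suffice here, so this is precisely the sheafification of the sub-presheaf $\mathcal{J}\hookrightarrow\mathcal{O}_R$ performed with finite products only, in the spirit of Theorem~\ref{T:basis} and the finite-cover sheafification of \citet[pp.~230ff.]{VerdierTopologies}. I would then check three things. (i) $\mathscr{I}\hookrightarrow\mathcal{O}_R$ is a subsheaf: separation is free since $\mathscr{I}(U)\subseteq\mathcal{O}_R(U)$ and $\mathcal{O}_R$ is separated, and for gluing, compatible local sections glue in $\mathcal{O}_R$ while the condition ``locally in $\mathcal{J}$'' is itself local, hence inherited by the glued section after refining two finite covers to a common finite one by compactness. (ii) $\mathscr{I}(D(g))=\mathcal{J}(D(g))$ on distinguished opens, by the usual sheafification identity. (iii) $\mathscr{I}$ is a sheaf of \emph{ideals}: its stalk at a point $x$ is the filtered colimit of the ideals $\mathcal{J}(D(g))$ over distinguished $D(g)\ni x$, hence an ideal of $\mathcal{O}_{R,x}$; consequently, for $s\in\mathscr{I}(U)$ and $t\in\mathcal{O}_R(U)$ the section $ts$ of $\mathcal{O}_R$ has every germ in $\mathscr{I}_x$, so it lies in the subsheaf $\mathscr{I}$ over $U$, and closure under addition is the same argument.

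The one genuinely delicate point — the main obstacle — is that sheafification is legitimately available in ZFG[$0$] at all: the classical construction forms arbitrary products $\prod_i\mathscr{F}(U_i)$ over covers, which need not exist here, and the whole force of the Noetherian hypothesis is that these may be replaced by finite products. That is exactly what Theorem~\ref{T:basis} records, so I would lean on it (or re-derive in place the tiny case actually needed). I would also remark that the digraph axioms — global, functional, decreasing on opens, increasing on ideals — are not used for this existence statement; they enter only afterwards, when one wants the generated sheaf to take the prescribed value $K_\nu$ over each $D(f_\nu)$ and distinct digraphs to generate distinct sheaves.
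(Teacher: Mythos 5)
Your construction is correct, and it reaches the goal by a genuinely different route from the paper's. The paper works directly on the base of distinguished opens: for each $D(h)$ it localizes every node ideal $K$ to $D(f)\cap D(h)$ and declares $\mathscr{I}(D(h))$ to be the set of patchings of compatible families drawn from the \emph{finite product} of these localizations, then invokes Theorem~\ref{T:basis} to extend to a sheaf; finiteness of the digraph is what keeps the product available in ZFG[$0$]. You instead form the sub-presheaf $\mathcal{J}(D(g))=\sum_{\nu}(K_\nu)_g$ of finite sums of localized node ideals and then sheafify with finite covers. Your version has a real advantage: it manifestly produces the \emph{smallest} subsheaf of ideals containing each $K_\nu$ over $D(f_\nu)$, which is exactly the notion of ``generates'' that the proof of Lemma~\ref{L:digraph} later relies on (there one shows a section of $\mathscr{I}$ is \emph{covered by} sections lying in the node ideals and concludes it belongs to the generated subsheaf). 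The paper's compatible-families condition, read literally, constrains a section on the trace of \emph{every} node simultaneously; since the root is $\langle\mathrm{Spec}(R),I\rangle$ and the ideals increase down the edges, that condition is already implied by membership in $I_h$ and would collapse the construction to the quasi-coherent sheaf $\widetilde{I}$ of the root ideal --- so your sum-then-sheafify reading is the one the rest of the section actually needs. You are also right that the four digraph axioms play no role in bare existence, and that the only foundational danger is sheafification itself, correctly discharged by finite covers on a Noetherian space.

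One caveat: your step (ii), the claim that $\mathscr{I}(D(g))=\mathcal{J}(D(g))$ on distinguished opens ``by the usual sheafification identity,'' is false in general. For a separated presheaf the unit $\mathcal{J}(D(g))\to\mathscr{I}(D(g))$ is injective but need not be surjective: a section locally in $\mathcal{J}$ need not be globally a finite linear combination of node generators (take a root ideal $(0)$ with two children whose ideals glue to a nonzero global section). Fortunately nothing in the lemma requires (ii) --- the statement only asks that the digraph generate \emph{some} sheaf of ideals --- so you should simply delete that claim rather than repair it; your verifications (i) and (iii) stand on their own.
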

\begin{proof} For any distinguished open $D(h)$, each node $\langle D(f),K\rangle$ of the digraph has an associated localization $K_h$ of its ideal to the intersection $D(f)\cap D(h)$; and $D(h)$ is covered by these intersections.  Define the value $\mathscr{I}\!(D(h))$ by patching together all compatible families of elements of these localizations $K_h$ over $D(h)$.  This involves taking the product of all the $K_h$, but there are only finitely many.
\end{proof} 

\begin{definition} For any ring $R$ a \emph{digraph of global generators} is a rooted directed graph with nodes $\langle D(f),G_f\rangle$ with $D(f)$ a distinguished open and $G_f$ a finite subset of $R$, such that the associated nodes $\langle D(f),K\rangle$ form a digraph of ideals where $K$ is the ideal of $R_f$ generated by $G_f$.  Note $G_f$ is a subset of $R$, not only of $R_f$.
\end{definition}

\begin{lemma} Every digraph of ideals has a digraph of global generators on the same open subsets $D(f)$.
\end{lemma}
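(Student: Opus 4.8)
The plan is to keep the underlying rooted directed graph exactly as given --- the same distinguished opens $D(f)$ and the same edges --- and only to replace the ideal $K$ at each node $\langle D(f),K\rangle$ by a finite subset $G_f\subseteq R$ whose image under the localization map $R\to R_f$ generates $K$. Here $R$ is Noetherian (the standing hypothesis of this subsection, and it is genuinely needed: over a non-Noetherian ring a digraph of ideals need carry no finite generators at all), so each $R_f$ is Noetherian by the localization theorem and $K$ is finitely generated over $R_f$, say $K=(a_1/f^{n_1},\dots,a_m/f^{n_m})$ with each $a_i\in R$. Since $f/1$ is a unit in $R_f$, the ideal $(a_1/f^{n_1},\dots,a_m/f^{n_m})$ coincides with the ideal generated by the images of $a_1,\dots,a_m$ alone, so $G_f=\{a_1,\dots,a_m\}$ is a finite subset of $R$ --- not merely of $R_f$ --- whose image generates $K$. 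At the root this specializes to a finite generating set $G_1\subseteq R$ of the ideal $I$ of global sections, since $\mathrm{Spec}(R)=D(1)$ and $R_1=R$.

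Next I would make this assignment uniform. The set of nodes of the given digraph is a set, and for each node $n=\langle D(f),K\rangle$ the collection $S_n$ of finite subsets of $R$ whose image in $R_f$ generates $K$ is a non-empty subset of $\mathrm{Fin}(R)$, hence a set by separation. Using the global well-ordering of ZFG[$0$] I take $G(n)$ to be the $\leq_{\gamma}$-least member of $S_n$; this is a definable operation, so replacement (which in ZFG[$0$] admits formulas mentioning $\leq_{\gamma}$) turns $n\mapsto\langle D(f),G(n)\rangle$ into a set. Retaining the original edges, this set is a rooted directed graph on the same distinguished opens.

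Finally I would observe that essentially nothing remains to verify. By construction the ideal of $R_f$ generated by $G(n)$ is the original $K$, so the digraph of ideals associated to the new graph is literally the original digraph of ideals. Hence it is Global, Functional, Decreasing on opens, and Increasing on ideals simply because the original was: for instance the ``Increasing on ideals'' clause for the new graph asserts that $K$ properly includes the localization to $D(f)$ of the ideal generated by $G_g$, and that ideal is exactly $H$, so this is verbatim the original clause; and ``Functional'' and ``Decreasing on opens'' only concern the first coordinates, which are unchanged. Thus the new graph is a digraph of global generators on the same opens $D(f)$.

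The only real obstacle is foundational rather than algebraic, and it lives entirely in the second paragraph: one must check that the construction stays inside ZFG[$0$]. Two points secure this --- Noetherianness of $R$, which is what makes every $S_n$ non-empty, and the appeal to global choice to select from all the $S_n$ simultaneously, which is needed because a digraph of ideals may be infinite (and infinitely branching), so that ZF[$0$] alone would not produce the required choice function. Once a definable selection $n\mapsto G(n)$ is in hand, replacement finishes the job.
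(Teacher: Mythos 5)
Your proof is correct and takes essentially the same route as the paper's: the paper's entire argument is that each $R_f$ is Noetherian and ``clearing denominators in any finite set of generators for an ideal of $R_f$ gives generators all in $R$ for that same ideal.'' Your additional care about making the selection uniform via the global well-ordering (since the definition does not require the digraph to be finite) is a reasonable elaboration the paper leaves implicit, not a different method.
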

\begin{proof}  Each localization $R_f$ is Noetherian, and clearing denominators in any finite set of generators for an ideal of $R_f$ gives generators all in $R$ for that same ideal.
\end{proof} 
 
The point for us is:

\begin{lemma}\label{L:digraph}Every sheaf of ideals $\mathscr{I}$\! on a Noetherian affine scheme $\mathrm{Spec}(R)$ is generated by a finite digraph of ideals (not unique).
\end{lemma}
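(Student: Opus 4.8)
The plan is to build the digraph from the top down by a well-founded recursion, its nodes recording, level by level, exactly the distinguished opens on which $\mathscr{I}$ exceeds the localization of the ideal predicted by its parent. Put the root $\langle\mathrm{Spec}(R),I\rangle$ with $I=\mathscr{I}(\mathrm{Spec}(R))$, an ideal of the Noetherian ring $R$. In general, suppose a node $\langle D(g),K\rangle$ has been placed with $K=\mathscr{I}(D(g))$, an ideal of the Noetherian ring $\mathcal{O}(D(g))=R_g$. Call a point $x\in D(g)$ \emph{bad} for this node when the stalk $\mathscr{I}_x$ strictly contains $K\cdot\mathcal{O}_x$. Every bad $x$ lies in some distinguished open $D(g')\subseteq D(g)$ with $\mathscr{I}(D(g'))\supsetneq K\cdot R_{g'}$: a germ lying in $\mathscr{I}_x$ but not in $K\cdot\mathcal{O}_x$ is represented by a section over some such $D(g')$, and that section cannot then lie in $K\cdot R_{g'}$; and $D(g')$ is a proper subset of $D(g)$, for otherwise $\mathscr{I}(D(g))$ would properly contain itself. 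The union of all such opens is an open subset of the Noetherian space $D(g)$, hence compact, so finitely many of them $D(g_1),\dots,D(g_m)$ already cover the bad locus. Make $\langle D(g_1),\mathscr{I}(D(g_1))\rangle,\dots,\langle D(g_m),\mathscr{I}(D(g_m))\rangle$ the children of $\langle D(g),K\rangle$. The conditions defining a digraph of ideals then hold by construction; in particular \emph{decreasing on opens} and \emph{increasing on ideals} are exactly the strict inequalities just arranged.

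Next I would show this digraph is finite. Each node has finitely many children, by the compactness step. And there is no infinite directed path: along a path $\langle D(g_0),K_0\rangle\to\langle D(g_1),K_1\rangle\to\cdots$ the opens strictly decrease, so each $R_{g_{k+1}}$ is a localization of $R_{g_k}$ compatibly with $R$; letting $J_k\subseteq R$ be the contraction of $K_k$ along $R\to R_{g_k}$, so that $J_kR_{g_k}=K_k$, one gets $J_0\subseteq J_1\subseteq\cdots$ (since the image of $K_k$ in $R_{g_{k+1}}$ lies in $K_{k+1}$), with every inclusion strict (since $J_k=J_{k+1}$ would give $K_{k+1}=(K_k)_{g_{k+1}}$, against \emph{increasing on ideals}), contradicting the ascending chain condition in $R$. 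A rooted digraph with finite out-degrees and no infinite directed path is finite: well-founded induction gives every node a finite height, and then induction on height bounds the number of descendants. All of this lives in ZFG[$0$]: $\mathrm{Spec}(R)$ and each $D(g)$ have a set of all open subsets, $R$ and each $R_g$ have a set of all ideals (Theorem~\ref{T:ideals}), open subsets of $\mathrm{Spec}(R)$ are compact, global choice selects the finite subcovers, and the digraph is assembled as a set level by level.

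For correctness, since $\mathscr{I}$ is a subsheaf of the structure sheaf it is determined by its stalks, so it is enough to see that the sheaf $\mathscr{J}$ generated by the digraph has $\mathscr{J}_x=\mathscr{I}_x$ for every point $x$. Unwinding the construction of the generated sheaf above, $\mathscr{J}_x$ is the sum of the ideals $K\cdot\mathcal{O}_x$ taken over all nodes $\langle D(f),K\rangle$ with $x\in D(f)$. Fixing $x$, a well-founded induction down the digraph shows that for every node $\langle D(g),K\rangle$ with $x\in D(g)$ the sum over the subtree rooted there is $\mathscr{I}_x$: if $x$ lies in the open of some child the inductive hypothesis applies, and each node ideal involved contributes an ideal contained in $\mathscr{I}_x$; if $x$ lies in no child's open then, the children covering the bad locus, $x$ is not bad, so $\mathscr{I}_x=K\cdot\mathcal{O}_x$, which is precisely the contribution of that node by itself. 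Taking the root yields $\mathscr{J}_x=\mathscr{I}_x$, and two subsheaves of the structure sheaf with equal stalks coincide.

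The hard part is the finiteness of the digraph, and within it the absence of infinite directed paths. A Noetherian space satisfies only the ascending chain condition on its opens, not the descending one, so the chain of distinguished opens running down a branch really can be infinite (already in $\mathrm{Spec}(\mathbb{Z})$); finiteness must therefore be forced by the ideal data, which live in the various localizations $R_g$ to which the chain condition does not directly apply. Contracting every node ideal back to the one Noetherian ring $R$ is what makes the argument close. The subsidiary observation that every bad point is witnessed by a distinguished open on which the ideal of sections genuinely jumps is what guarantees that the children one is forced to add meet the strict inequalities demanded of a digraph of ideals.
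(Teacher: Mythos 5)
Your proof is correct and follows essentially the same route as the paper's: build the digraph generation by generation, use compactness of open subsets of a Noetherian space to select finitely many children on which $\mathscr{I}$ strictly exceeds the localized parent ideal, rule out infinite directed paths by the ascending chain condition in $R$, conclude finiteness by K\H onig's lemma, and then verify the generated subsheaf equals $\mathscr{I}$ by descending the finite digraph. Your only real divergences are expository --- you select children via bad points and check correctness stalkwise rather than section-by-section, and you usefully make explicit the contraction of the node ideals back to $R$, which the paper's phrase ``a strictly increasing chain of ideals of $R$'' leaves implicit.
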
 
\begin{proof} We produce a digraph by successive generations.  The first generation is the root $\langle\mathrm{Spec}(R),I\rangle$ for $I$ the global sections of $\mathscr{I}$\!.  Call an open $D(h)$ \emph{expansive} from $\mathrm{Spec}(R)$ if the value $\mathscr{I}\!(D(h))$ is strictly larger than the localization $I_h$.  The union of all these opens is the union, generally in many ways,  of some finite set of them.  Choose such a finite set $\{D(h_1),\dots,D(h_n)\}$ and let the children of the root be the pairs $\langle D(h_i),\mathscr{I}\!(D(h_i))\rangle$.  Call this the second generation. 

For the third generation repeat that reasoning with each $h_i$ in place of $\mathrm{Spec}(R)$.  Call an open $D(j)\subseteq D(h_i)$ \emph{expansive} from $D(h_i)$ if the value $\mathscr{I}\!(D(j))$ is strictly larger than the localization $\mathscr{I}\!(D(h_i))_j$.  The union of all these opens is the union, generally in many ways,  of some finite set of them $\{D(j_1),\dots,D(j_m)\}$.  Let the children of the root be the pairs $\langle D(j_i),\mathscr{I}\!(D(j_i))\rangle$.  Continue forming new generations as long as there are expansive opens.

Each node has finitely many children. Each directed path is finite since it gives a strictly increasing chain of ideals of $R$.  By K\H onig's lemma the digraph is finite.  So it generates a subsheaf $\mathscr{I'}$ of $\mathscr{I}$\!.  To show $\mathscr{I'=I}$ it suffices to show each section $s\in \mathscr{I}\!(D(f))$ on a distinguished open is also in $\mathscr{I'}(D(f))$.  

Either $s\in \mathscr{I'}(D(f))$ or $D(f)$ is expansive from the root $\mathrm{Spec}(R)$.    In the latter case, $s$ is covered by its restrictions $s_i\in \mathscr{I}\!(D(h_i)\cap D(f))$ for each child $\langle D(h_i),\mathscr{I}\!(D(h_i))\rangle$ of the root.  Repeat that reasoning for each generation.  A directed path ends only when it has no expansive opens below it, i.e.~when it reaches a section in $\mathscr{I'}$.  And each directed path is finite. So the section $s$ is covered by sections in $\mathscr{I'}$.
\end{proof}

\begin{theorem}\label{T:zariskifiniteideal}Every sheaf of ideals $\mathscr{I}$\! on a Noetherian affine scheme $\mathrm{Spec}(R)$ is generated by a finite digraph of global generators (not unique).
\end{theorem}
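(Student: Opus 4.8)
The plan is to chain together the two lemmas immediately preceding Theorem~\ref{T:zariskifiniteideal}, so that almost no new work is needed. First I would invoke Lemma~\ref{L:digraph} to get a finite digraph of ideals $\mathcal{D}$ generating the given sheaf $\mathscr{I}$\!, with nodes $\langle D(f),K\rangle$ where $K$ is an ideal of the localization $R_f$\!. Then I would apply the lemma that every digraph of ideals has a digraph of global generators on the same open subsets $D(f)$: for each node, since $R_f$ is Noetherian, choose a finite set generating $K$ in $R_f$ and clear denominators to obtain a finite subset $G_f\subseteq R$ whose image in $R_f$ generates the same ideal $K$. This yields a digraph of global generators $\mathcal{D}'$ whose associated digraph of ideals is exactly $\mathcal{D}$.

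Because the associated nodes $\langle D(f),K\rangle$ of $\mathcal{D}'$ are by construction the nodes of $\mathcal{D}$, and because a digraph of global generators generates its sheaf precisely through its associated digraph of ideals, $\mathcal{D}'$ generates the same sheaf of ideals as $\mathcal{D}$, namely $\mathscr{I}$\!. Finiteness of $\mathcal{D}'$ is immediate: it has the same finite node set, indexed by the same finite family of distinguished opens $D(f)$, and the same edge set as $\mathcal{D}$. Hence $\mathscr{I}$ is generated by a finite digraph of global generators, with no uniqueness claim, since already $\mathcal{D}$ was not unique and the clearing of denominators introduces still further choices.

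Foundationally there is nothing new to verify: forming the finitely many localizations $R_f$, taking the finite products used in generating a sheaf from a finite digraph, and the K\H onig's-lemma bound on the digraph were all already discharged within ZFG[$0$] in Lemma~\ref{L:digraph} and in the global-generators lemma. The one point that genuinely deserves attention --- and I expect it to be the only, mild, obstacle --- is to confirm that replacing the ideals $K$ by finite generating sets $G_f$ does not change the sheaf that gets generated. This holds because each $G_f$ was chosen to generate the very ideal $K$ of $R_f$ that appears in $\mathcal{D}$, so node by node, and therefore globally, the generation procedure returns the same values $\mathscr{I}\!(D(h))$. Thus the theorem is essentially a repackaging of the two lemmas it follows.
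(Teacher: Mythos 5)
Your proof is correct and is exactly the argument the paper intends: the theorem is stated without proof precisely because it is the immediate combination of Lemma~\ref{L:digraph} with the preceding lemma that every digraph of ideals admits a digraph of global generators on the same opens. Your added check that the associated digraph of ideals is unchanged, so the generated sheaf is unchanged, is the right (and only) point to verify.
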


\subsubsection{Sheaves of Abelian groups}

For any Noetherian space $X$ notice each open subset $U\subseteq X$\! is covered by connected open subsets.  Without loss of generality we can assume $X$\! is connected so for any constant sheaf on $X$\! the restriction map from $X$\! to any connected open subset is an isomorphism.

For any sheaf $\mathscr{I}$\! of ideals of the constant sheaf $\mathbb{Z}$ on $X$\! and connected open subsets $U\subseteq V\subseteq X$\!, call $U$\! \emph{expansive} from $V$\! if $\mathscr{I}(V)\subsetneq \mathscr{I}(U)$.  A simple analog of Lemma~\ref{L:digraph} shows $\mathscr{I}$\! is generated by a finite digraph of global generators.   

We state the following theorems for sheaves of modules on the structure sheaf on Noetherian schemes.  Their analogs for sheaves of Abelian groups are similar.

\begin{theorem}For any Noetherian scheme $X$ there is a set $\mathcal{I}d(X)$ of all sheaves of ideals of the structure sheaf $\mathcal{O}_X$\!.
\end{theorem}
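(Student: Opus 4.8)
The plan is to reduce to the affine case already settled by Theorem~\ref{T:zariskifiniteideal} and then glue along a finite affine cover, keeping every step inside finite products so that only the replacement and separation schemes of ZFG[$0$] --- never power set or an infinite product --- are invoked.

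\textbf{Affine case.} First I would show $\mathcal{I}d(\mathrm{Spec}(R))$ is a set for each Noetherian ring $R$. Since $\mathrm{Spec}(R)$ is a topological space it carries a \emph{set} of open subsets, hence by separation a set $\mathscr{D}$ of distinguished opens; and $\mathrm{Fin}(R)$ is a set. So a node $\langle D(f),G_f\rangle$ of a digraph of global generators is an element of the set $\mathscr{D}\times\mathrm{Fin}(R)$, and a finite digraph of global generators --- a finite set of such nodes together with a finite set of edges and a chosen root --- is an element of a set built from $\mathscr{D}\times\mathrm{Fin}(R)$ by the operations $\mathrm{Fin}(-)$ and $\times$. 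By separation there is a set of those tuples actually satisfying the clauses defining a digraph of global generators, each of which generates a sheaf of ideals on $\mathrm{Spec}(R)$ by the lemma on finite digraphs of ideals; so by replacement there is a set $\mathcal{S}$ containing every sheaf so generated. By Theorem~\ref{T:zariskifiniteideal} every sheaf of ideals on $\mathrm{Spec}(R)$ lies in $\mathcal{S}$, so $\mathcal{S}=\mathcal{I}d(\mathrm{Spec}(R))$.

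\textbf{Gluing.} Now cover the Noetherian scheme $X$ by finitely many affine opens $U_1=\mathrm{Spec}(R_1),\dots,U_n=\mathrm{Spec}(R_n)$ with each $R_i$ Noetherian, as in the definition of a Noetherian scheme. Then $P=\mathcal{I}d(U_1)\times\cdots\times\mathcal{I}d(U_n)$ is a set, being a finite product. Each sheaf of ideals $\mathscr{I}$ of $\mathcal{O}_X$ restricts to $\mathscr{I}|_{U_i}\in\mathcal{I}d(U_i)$, and $\mathscr{I}$ is recovered from this finite tuple: for any open $W\subseteq X$, since $\mathscr{I}$ is a subsheaf of $\mathcal{O}_X$ and $\{W\cap U_i\}$ covers $W$,
\[ \mathscr{I}(W)=\{\,s\in\mathcal{O}_X(W)\mid s|_{W\cap U_i}\in\mathscr{I}(W\cap U_i)\text{ for all }i\,\}. \]
Thus $\mathscr{I}\mapsto(\mathscr{I}|_{U_1},\dots,\mathscr{I}|_{U_n})$ is an injection of the class of all sheaves of ideals of $\mathcal{O}_X$ into the set $P$; its image, cut out by the definable condition that the $\mathscr{I}|_{U_i}$ agree on the overlaps $U_i\cap U_j$, is a set by separation, and the inverse of the injection is a definable class function carrying that set onto the class of all sheaves of ideals of $\mathcal{O}_X$, which is therefore a set $\mathcal{I}d(X)$ by replacement.

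\textbf{Main obstacle.} The genuine mathematical content sits entirely in Theorem~\ref{T:zariskifiniteideal}; what needs care here is purely foundational bookkeeping --- verifying that the collection of finite digraphs of global generators really is a set (this is where the ``set of opens'' stipulation and $\mathrm{Fin}(R)$ do the work), and checking at each stage --- the finite affine cover, the finitely many distinguished opens inside a digraph node, the finite product $P$, the overlaps $U_i\cap U_j$ which are themselves finite unions of distinguished opens --- that nothing forces an infinite product or a power set.
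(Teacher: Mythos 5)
Your proposal is correct and follows essentially the same route as the paper: reduce to a finite affine cover, then use the finiteness of the generating digraphs (the paper encodes finite digraphs of ideals as finite subsets of $(\mathcal{D}(R)\times\mathcal{L}(R))^2$, you encode digraphs of global generators via $\mathscr{D}\times\mathrm{Fin}(R)$ --- an immaterial difference) and apply replacement. Your explicit gluing argument just fills in the step the paper compresses into ``the problem reduces to finitely many affine Noetherian schemes covering $X$.''
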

\begin{proof} The problem reduces to finitely many affine Noetherian schemes $\mathrm{Spec}(R),\mathcal{O}_R$ covering $X$\!.  By replacement it suffices to have a set of all finite digraphs of ideals of $\mathcal{O}_R$.  But replacement gives a set $\mathcal{D}(R)$ of all distinguished opens of $\mathrm{Spec}(R)$.  Each localization $R_f$ has a set of ideals, so replacement and sum set give a set $\mathcal{L}(R)$ of all ideals of localizations.  So each sheaf in $\mathcal{I}d(R)$ is given by some finite subset of the cartesian square $(\cprod{\mathcal{D}(R)}{\mathcal{L}(R)})^2$.
\end{proof}

\begin{theorem}\label{T:abgrpfiniteideal} For any Noetherian scheme $X$\! and any sheaf of ideals $\mathscr{I}$\! and sheaf of modules $\mathscr{M}$\! on $X$ there is a set $(\mathscr{I\!,M})$ of all $\mathcal{O}_X$-module maps \clarrow{\mathscr{I}\!}{\mathscr{M}}\!.  So for fixed $\mathscr{M}$\! there is a set of all  $(\mathscr{I\!,M})$.
\end{theorem}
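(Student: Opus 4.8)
The plan is to reduce the question to affine Noetherian schemes, present each sheaf of ideals by a finite digraph of global generators via Theorem~\ref{T:zariskifiniteideal}, and then observe that an $\mathcal{O}_X$-module map out of $\mathscr{I}$ is pinned down by finitely much data, so the maps form a definable subclass of a set and hence a set.

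First I would cover $X$ by finitely many affine opens $\mathrm{Spec}(R_1),\dots,\mathrm{Spec}(R_n)$ with each $R_j$ Noetherian, which is possible since $X$ is a Noetherian scheme. An $\mathcal{O}_X$-module map $\phi\colon\mathscr{I}\to\mathscr{M}$ is the same as a family of $\mathcal{O}_{R_j}$-module maps $\phi_j\colon\mathscr{I}|_{\mathrm{Spec}(R_j)}\to\mathscr{M}|_{\mathrm{Spec}(R_j)}$ agreeing on overlaps, and agreement on overlaps is a first-order condition on the tuple $(\phi_1,\dots,\phi_n)$. So it suffices to produce, for each $j$, a set of all such $\phi_j$, form the finite product of these sets, and cut out the compatible families as a definable subclass, which is a set by separation.

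For the affine case fix $\mathrm{Spec}(R)$ with $R$ Noetherian and write $\mathscr{J}=\mathscr{I}|_{\mathrm{Spec}(R)}$. By Theorem~\ref{T:zariskifiniteideal}, $\mathscr{J}$ is generated by a finite digraph of global generators with nodes $\langle D(f),G_f\rangle$, each $G_f$ a finite subset of $R$. A map $\psi\colon\mathscr{J}\to\mathscr{M}|_{\mathrm{Spec}(R)}$ restricts on each node to an $R_f$-linear map $\mathscr{J}(D(f))\to\mathscr{M}(D(f))$, hence is determined on that node by the finitely many values $\psi_{D(f)}(g)\in\mathscr{M}(D(f))$ for $g\in G_f$. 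Because the digraph generates $\mathscr{J}$ in the sense of Lemma~\ref{L:digraph}, every section of $\mathscr{J}$ over any distinguished open is covered by $R$-combinations of restrictions of these generators, so by the sheaf condition for $\mathscr{M}$ applied to the finite covers available on a Noetherian space, the node values determine $\psi$ everywhere. Thus $\psi$ is encoded by a function from the finite set $\bigsqcup_{\text{nodes}}G_f$ into the set $\bigcup_{\text{nodes}}\mathscr{M}(D(f))$, which is a set since a sheaf is a set. The graph of such a function is a finite subset of the product of those two sets, so the $\mathrm{Fin}$ operation gives a set of all of them; the ones actually arising from a sheaf map $\psi$ (respecting the relations among the $G_f$ and the restriction maps) form a definable subclass, hence a set. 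This yields the set of all $\psi$ on $\mathrm{Spec}(R)$, and assembling the affine pieces as above produces $(\mathscr{I},\mathscr{M})$.

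For the final clause, apply replacement to the operation $\mathscr{I}\mapsto(\mathscr{I},\mathscr{M})$ over the set $\mathcal{I}d(X)$ of all sheaves of ideals of $\mathcal{O}_X$ furnished by the preceding theorem. I expect the only real subtlety to be the claim that a sheaf map out of $\mathscr{I}$ is genuinely determined by its values on the finite digraph of global generators: this needs the precise sense of ``generates'' from the lemmas of Section~\ref{S:digraphs} together with the fact that $\mathscr{M}$ satisfies the gluing axiom for the finite covers that occur on a Noetherian space. The corresponding statements for sheaves of Abelian groups follow the same pattern, using the digraph presentation noted after Theorem~\ref{T:zariskifiniteideal}.
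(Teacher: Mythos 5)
Your proposal is correct and follows essentially the same route as the paper: reduce to a finite affine cover, invoke the finite digraph of global generators from Theorem~\ref{T:zariskifiniteideal} so that each map \clarrow{\mathscr{I}\!}{\mathscr{M}} is determined by finitely many values on the generators, and obtain the final clause by replacement. You simply spell out the set-existence bookkeeping (the $\mathrm{Fin}$ encoding, separation, and gluing over the cover) that the paper leaves implicit.
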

\begin{proof} The first claim implies the second by replacement.  Again the problem reduces to $\mathrm{Spec}(R)$ in any finite affine cover.  Given $\mathscr{I\!,M}$\! on $\mathrm{Spec}(R)$ consider any finite digraph of generators.  Each map  \clarrow{\mathscr{I}\!}{\mathscr{M}}\! is determined by the finitely many values of $s\in R$ in the digraph.
\end{proof}

\subsection{The T\^{o}hoku construction}
The argument of Section~\ref{S:Baer} adapts directly to sheaves of modules and ideals on a Noetherian scheme, and to sheaves of Abelian groups on any Noetherian space.  For elements of groups, modules, and ideals take sections (partial or global) of the sheaves.   Where Section~\ref{S:Baer} refers to a non-disjoint sum this proof uses a pushout of submodules.  All covers can be assumed finite.  This gives all the general theorems of derived functor cohomology for these cases, understanding that these categories of sheaves do not exist as legitimate entities.   They are shorthand for certain definable classes, and functors between them are shorthand for definable functorial operations.

\section{Counterexample in \'etale  cohomology}

The finiteness assertions analogous to Theorems~\ref{T:zariskifiniteideal} and \ref{T:abgrpfiniteideal} fail for \'etale cohomology.  A single ideal of the \'etale structure sheaf of a Noetherian scheme can hold information about arbitrarily high degree covers of that scheme and so not be finitely generated. Our counterexample occurs over a perfectly natural base namely the punctured line over any algebraically closed field $k$ of characteristic $\neq 2$.   Because it is only a counterexample, we will take more basic facts about \'etale sheaves for granted than we did for the positive results on Zariski sheaves above.  See  \citet[Thm.~03OJ]{StacksAlgebra} for the definition of \'etale structure sheaves.

Consider the series of  iterated double covers of the punctured $k$ line:

\[ \mathrm{Spec}(k[x_0,x_0^{-1}]) \leftarrow \mathrm{Spec}(k[x_1,x_1^{-1}]) \leftarrow \mathrm{Spec}(k[x_2,x_2^{-1}]) \leftarrow \dots \]
given by 
  \[ x_0\ \mapsto\ x_1^2 \qquad x_1\ \mapsto\ x_2^2\ \qquad \dots \] 

These covers have a lot of symmetries, and symmetries tend to defeat our purpose.  So remove points from each to break the symmetry over the preceding one, but  keep covering the preceding one.   For example, from each $\mathrm{Spec}(k[x_n,x_n^{-1}])$ delete the point $(x_n-2)$, plus deleting all of the points that lie over points deleted from earlier covers.
Then the general term in the series is
 \[ X_n = \mathrm{Spec}(k[x_n,x_n^{-1},((x_n^{2n}-2)\cdot(x_n^{2(n-1)}-2)\cdots (x_n^2-2))^{-1}]) \]
Let $\mathcal{C}$ be the full subcategory of these covers in the \'etale site over the punctured $k$ line.  It is just the sequence
 \[   \mathrm{Spec}(k[x_0,x_0^{-1}])=X_0 \leftarrow  X_1 \leftarrow X_2 \leftarrow \dots \]

For each $X_n$ take the ideal $(x_n-1)$. These ideals form a presheaf ideal $\mathscr{I}$\! of the \'etale structure sheaf restricted to $\mathcal{C}$.  What matters is that each ideal is strictly larger than the presheaf property requires, so the presheaf requires infinitely many generators.  We will see this remains true for the \'etale sheaf of ideals that this presheaf generates. 

The sections of $\mathscr{I}$\! generate a sheaf of ideals of the \'etale structure sheaf $\mathscr{O}_{X_0et}$ on the punctured $k$ line.   In more detail, there is a well defined presheaf of sets assigning to each \'etale open \clarrow{\mathrm{Spec}(S)}{\mathrm{Spec}(k[x_0,x_0^{-1}])} the set of all restrictions to $\mathrm{Spec}(S)$ of sections in  $\mathscr{I}$\!.  Now over each \clarrow{\mathrm{Spec}(S)}{\mathrm{Spec}(k[x_0,x_0^{-1}])} form the set of all $S$-linear combinations of sections, to get a presheaf $\mathscr{I}^*$ of ideals on the \'etale site.  Since $\mathscr{I}^*$\! is a subpresheaf of a sheaf it is separated, so pasting together all compatible families of sections gives an \'etale sheaf $\mathscr{I}_{et}$.

\begin{lemma}\label{L:proper} The \'etale sheaf is a proper ideal:  $\mathscr{I}_{et}\neq \mathscr{O}_{X_0et}$.
\end{lemma}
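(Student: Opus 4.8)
The plan is to exhibit a geometric point $\bar x$ of $X_0$ at which the stalk $(\mathscr{I}_{et})_{\bar x}$ is a \emph{proper} ideal of the strictly Henselian local ring $\mathscr{O}_{X_0et,\bar x}$; since the structure sheaf's stalk there is the whole ring, this already forces $\mathscr{I}_{et}\neq \mathscr{O}_{X_0et}$. First I would take $\bar x$ lying over the point $x_0=1$ of $X_0$, which is the only point at which all the generating sections $x_n-1$, and hence any combination of their pullbacks, can simultaneously vanish. Because sheafification does not change stalks, $(\mathscr{I}_{et})_{\bar x}=(\mathscr{I}^*)_{\bar x}$ is the filtered colimit of the ideals $\mathscr{I}^*(U)$ taken over the étale neighborhoods $(U,\bar u)$ of $\bar x$. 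Since $\mathscr{I}^*(U)$ is by construction the ideal of $\mathscr{O}(U)$ generated by the elements $\phi^*(x_m-1)$ as $\phi$ ranges over $X_0$-morphisms $U\to X_m$ and $m$ over $\mathbb{N}$, it suffices to prove: for every such neighborhood and every such $\phi$, the element $\phi^*(x_m-1)$ lies in the maximal ideal at $\bar u$; equivalently, $\phi$ carries $\bar u$ to the point $x_m=1$ of $X_m$.

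The heart of the argument is that no étale neighborhood of $\bar x$ admits a ``twisted'' map into the tower. An $X_0$-morphism $\phi\colon U\to X_m$ is the same datum as a $2^m$-th root $\rho=\phi^*(x_m)$ of $x_0$ in $\mathscr{O}(U)$ which avoids the deleted loci of $X_m$, and then $\phi^*(x_m-1)=\rho-1$, while $\rho(\bar u)$ is necessarily a $2^m$-th root of unity in $k$. I must rule out $\rho(\bar u)=\zeta$ with $\zeta\neq 1$. This is exactly what the punctures in the $X_m$ are arranged to achieve: if $\rho(\bar u)=\zeta\neq 1$, then following $\zeta$ down the chain of double covers one of the functions that $\phi$ is required to invert (coming from the point deleted at that level or from one of the points deleted by propagation from an earlier level) acquires a zero at $\bar u$ — contradicting that $\phi$ factors through the deleted affine scheme $X_m$. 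Hence $\rho(\bar u)=1$, so $\rho-1$ vanishes at $\bar u$, and therefore $\mathscr{I}^*(U)\subseteq \mathfrak{m}_{\bar u}$ for every neighborhood $U$; passing to the colimit gives $(\mathscr{I}_{et})_{\bar x}\subseteq \mathfrak{m}_{\bar x}\neq \mathscr{O}_{X_0et,\bar x}$, which completes the proof.

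I expect the main obstacle to be precisely this last step: making rigorous that the deletions defining the $X_n$ genuinely obstruct every $2^m$-th root of $x_0$ from splitting the trivial branch $x_0=1$ inside the small étale site around $\bar x$, and doing so uniformly over \emph{all} étale neighborhoods (not merely the $X_n$ themselves — though this is automatic once one notes that any $\phi\colon U\to X_m$ over $X_0$ is captured by the root-of-$x_0$ dictionary, whatever $U$ is). Everything else — the reduction to a single stalk, the identification of $(\mathscr{I}^*)_{\bar x}$ as a filtered colimit, and the translation between $X_0$-maps to $X_m$ and roots of $x_0$ — is routine, and in particular the same choice of punctures is the one that keeps each $x_n-1$ strictly larger than the restricted ideal from level $n-1$, which is what makes the example non-finitely generated.
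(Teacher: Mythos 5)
Your reduction to a single geometric point, the identification of $(\mathscr{I}_{et})_{\bar x}=(\mathscr{I}^*)_{\bar x}$ as a filtered colimit, and the dictionary between $X_0$-morphisms $U\to X_m$ and $2^m$-th roots of $x_0$ avoiding the deleted loci are all sound. The gap is exactly where you predicted it, and it cannot be closed: the claim that every admissible root $\rho$ satisfies $\rho(\bar u)=1$ is false, already for $m=1$, over any $k$ of characteristic $0$ (indeed whenever $2\neq -1$ in $k$). The points deleted from $X_m$ are those where some $x_j=2$, and these lie over $x_0=2^{2^j}$; none of them is in the fibre over $x_0=1$. Concretely, let $\rho\in\mathscr{O}_{X_0et,\bar x}$ be the Hensel lift of the root $-1$ of $T^2-x_0$ modulo $\mathfrak{m}_{\bar x}$; on a small enough \'etale neighborhood $U$ of $\bar x$ this is a square root of $x_0$ with $\rho(\bar u)=-1$, and the only condition for the corresponding $X_0$-map $\phi\colon U\to X_1$ to exist is invertibility of $\rho-2$, which holds since $\rho(\bar u)-2=-3\neq 0$. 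Then $\phi^*(x_1-1)=\rho-1\in\mathscr{I}^*(U)$ has value $-2$ at $\bar u$, a unit, so $(\mathscr{I}_{et})_{\bar x}$ is the unit ideal. The deletions kill the \emph{global} deck transformations of $X_m\to X_0$ (which is why $\mathcal{C}$ has only the structural morphisms), but they cannot prevent a twisted map into $X_m$ from being defined on a sufficiently small \'etale neighborhood of a point over $x_0=1$, because the obstructing deleted points sit over entirely different fibres. And since the stalk over any $a\neq 1$ already contains the unit $a-1$ (pull back $x_0-1$ along the identity of $X_0$), there is no other geometric point at which your strategy could succeed.

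For comparison, the paper argues globally rather than stalkwise: it uses quasi-compactness of \'etale covers to reduce the supposed equality $\mathscr{I}_{et}=\mathscr{O}_{X_0et}$ to the assertion that the unit section on some neighborhood $V$ of the point $(x_n-1)$ in some $X_n$ lies in the ideal $(x_n-1)$. But that reduction tacitly assumes that the only restrictions of sections of $\mathscr{I}$ reaching such a $V$ are pulled back along the structural maps $V\subseteq X_n\to X_m$, and the same partially defined twisted maps (defined on $V$ minus finitely many points, or on a smaller \'etale open) defeat that assumption in the same way: already $x_1-1$ and $-x_1-1$ generate the unit ideal on $X_1\setminus\{x_1=-2\}$. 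So your difficulty is not an artifact of the stalkwise approach; it is the crux of the lemma, and neither your argument nor the paper's supplies the missing control over twisted local restrictions.
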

\begin{proof} Since every \'etale cover has a finite subcover, and every section of $\mathscr{I}_{et}$ is covered by a finite linear combination of restrictions of sections of the original $\mathscr{I}$\!, equality $\mathscr{I}_{et}= \mathscr{O}_{X_0et}$ would imply that in some $X_n$ the unit section on some neighborhood of the point $(x_n-1)\in X_n$ is in the ideal  $(x_n-1)$ which is absurd.
\end{proof}

\begin{lemma} For each of the original $X_i$ we have $\mathscr{I}(X_i)=\mathscr{I}_{et}(X_i)$.
\end{lemma}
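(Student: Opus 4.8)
The plan is to prove the two inclusions $\mathscr{I}(X_i)\subseteq\mathscr{I}_{et}(X_i)$ and $\mathscr{I}_{et}(X_i)\subseteq\mathscr{I}(X_i)$ separately, the first being routine and the second being where the work lies. For the routine direction: $x_i-1$ is itself one of the sections of $\mathscr{I}$ over $X_i$, so it lies in $\mathscr{I}^*(X_i)$, and since $\mathscr{I}^*$ is a subpresheaf of the sheaf $\mathscr{O}_{X_0et}$ it is separated, so the canonical map $\mathscr{I}^*(X_i)\to\mathscr{I}_{et}(X_i)$ is injective and hence $x_i-1\in\mathscr{I}_{et}(X_i)$. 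As $\mathscr{I}_{et}(X_i)$ is an ideal of the coordinate ring $A_i=\mathscr{O}_{X_0et}(X_i)$, it then contains all of $(x_i-1)=\mathscr{I}(X_i)$.

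For the reverse inclusion I would first record an algebraic fact that collapses it to a finiteness statement. The ring $A_i$ is a localization of $k[x_i]$, hence a one--dimensional domain, and the point $x_i=1$ is not among those deleted in forming $X_i$ (each inverted factor $x_i^{2j}-2$ has the value $-1\neq 0$ there), so $(x_i-1)$ is a \emph{maximal} ideal of $A_i$, with residue field $k$. Thus any ideal of $A_i$ containing $(x_i-1)$ is either $(x_i-1)$ itself or all of $A_i$, and it suffices to show $1\notin\mathscr{I}_{et}(X_i)$. This is the argument of Lemma~\ref{L:proper} run over $X_i$ in place of $X_0$: if $1\in\mathscr{I}_{et}(X_i)$, then since $X_i$ is quasi-compact there is a finite \'etale cover $\{V_\alpha\to X_i\}$ on each piece of which $1$ is a finite $\mathscr{O}(V_\alpha)$-linear combination of restrictions of sections of $\mathscr{I}$; choose $N\ge i$ so large that each of those finitely many sections lives on some $X_n$ with $n\le N$, and pull the cover back along the tower map $X_N\to X_i$ to obtain a finite \'etale cover $\{V_\alpha\times_{X_i}X_N\to X_N\}$ of $X_N$. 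On each piece $1$ is still the same linear combination, and now every restriction of a section of $\mathscr{I}$ occurring in it, pulled up along $X_N\to X_i$, is a section of $(x_N-1)\subseteq\mathscr{O}_{X_N}$, because the structural restriction maps of $\mathscr{I}$ on $\mathcal{C}$ carry $\mathscr{I}(X_n)=(x_n-1)$ into $\mathscr{I}(X_N)=(x_N-1)$ (as $x_n=x_N^{2^{N-n}}$). Hence $x_N-1$ is a unit on each $V_\alpha\times_{X_i}X_N$; these cover $X_N$, in particular a neighborhood of the point $x_N=1$, contradicting that $(x_N-1)$ is a proper ideal. So $1\notin\mathscr{I}_{et}(X_i)$, and $\mathscr{I}_{et}(X_i)=(x_i-1)=\mathscr{I}(X_i)$.

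The step I expect to be the real obstacle is the one just invoked, that each restriction of a section of $\mathscr{I}$ pulls up the tower into $(x_N-1)$ --- equivalently, that the restrictions entering the construction of $\mathscr{I}^*$ are built only from the structural tower maps and never from a tower map composed with a nontrivial deck transformation $x_n\mapsto\zeta x_n$ of $X_n$ over $X_0$. Admitting such a twist with $\zeta\neq 1$ would send $x_n-1$ to $\zeta x_n-1$, a section that is a unit near the point $x_n=1$; taking $\zeta=-1$ (using $\mathrm{char}\,k\neq 2$) one would already have that $\mathscr{I}_{et}(X_1)$ contains both $x_1-1$ and $x_1+1$, hence the unit $2$, so $\mathscr{I}_{et}=\mathscr{O}_{X_0et}$, contrary to Lemma~\ref{L:proper}. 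So the needed compatibility is in fact forced by the setup, and checking it carefully against the precise definition of $\mathscr{I}^*$ is the one delicate point; everything else is the maximality observation together with the quasi-compactness bookkeeping already carried out for Lemma~\ref{L:proper}.
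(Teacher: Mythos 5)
Your argument has the same skeleton as the paper's: the easy inclusion, the observation that $(x_i-1)$ is maximal in $\mathscr{O}(X_i)$ so that failure of equality forces $1\in\mathscr{I}_{et}(X_i)$, and then a contradiction with properness of $\mathscr{I}_{et}$. The divergence is in how that contradiction is reached. The paper does it in one line: since $X_i\to X_0$ is itself a covering in the \'etale site, the unit section over $X_i$ pulls back to a covering of every \'etale open of $X_0$ and glues, so $1\in\mathscr{I}_{et}(X_i)$ already gives $\mathscr{I}_{et}=\mathscr{O}_{X_0et}$, which is exactly what Lemma~\ref{L:proper} forbids; Lemma~\ref{L:proper} is then used as a black box. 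You instead re-run the \emph{proof} of Lemma~\ref{L:proper} with $X_i$ as base, pulling the finite cover up the tower to some $X_N$ and tracking where each restricted generator lands. That detour is what forces you to confront the ``delicate point'' you flag at the end --- that the restrictions entering $\mathscr{I}^*$ over $V_\alpha\times_{X_i}X_N$ are induced by the structural tower map $X_N\to X_n$ and not by some other $X_0$-map --- and your resolution of it is not a proof: arguing that a nontrivial twist \emph{would} contradict Lemma~\ref{L:proper} does not show no twist exists, since if twisted maps existed they would enter the construction of $\mathscr{I}^*$ regardless. What actually excludes them is the symmetry-breaking deletion of points, which makes the tower map the unique map $X_N\to X_n$ over $X_0$; that is a direct check on the coordinate rings, not a consequence of the lemma you are invoking. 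So: same approach in substance, but you should either replace the pullback argument by the one-line descent along the cover $X_i\to X_0$ followed by a citation of Lemma~\ref{L:proper}, or else verify the uniqueness of the tower maps outright.
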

\begin{proof} Obviously $\mathscr{I}(X_i)\subseteq \mathscr{I}_{et}(X_i)$.  And $\mathscr{I}(X_i)=(x_i-1)$ is a maximal ideal in its ring.  So if $\mathscr{I}(X_i)=\mathscr{I}_{et}(X_i)$ fails for some $X_i$ then that $\mathscr{I}_{et}(X_i)$ is the unit ideal in its ring.  But each $X_i$ covers the base, so this implies $\mathscr{I}_{et}= \mathscr{O}_{X_0et}$.
\end{proof}

\begin{theorem} No finite set of sections of $\mathscr{I}_{et}$ generates the whole.
\end{theorem}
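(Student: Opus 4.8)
The plan is to mimic the elementary observation, made just above in the presheaf case, that a finite generating set can only ``see'' the covers $X_n$ up to some bounded level $N$, and that then the section $x_{N+1}-1$ over $X_{N+1}$ cannot lie in the ideal such a set generates. So suppose toward a contradiction that finitely many sections $s_1,\dots,s_r$ of $\mathscr{I}_{et}$ generate it, $s_j$ being a section over an \'etale open $U_j\to X_0$.

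First I would bound the level. Since $\mathscr{I}_{et}$ is the sheafification of the presheaf $\mathscr{I}^*$ whose value over an \'etale open is the ideal of $\mathcal{O}$-linear combinations of restrictions of sections of $\mathscr{I}$, and since every \'etale cover has a finite subcover, each $s_j$ agrees, over some finite \'etale cover of $U_j$, with a finite such combination; only finitely many of the covers $X_n$ occur across all of these. Let $N$ be the largest index that occurs. Because the $s_j$ generate, it follows (refining covers so they factor through the $U_j$ and through the covers used to express the $s_j$) that \emph{every} section of $\mathscr{I}_{et}$ over \emph{every} \'etale open is, over a suitable finite \'etale cover, a finite $\mathcal{O}$-linear combination of restrictions of sections of $\mathscr{I}(X_n)$ with $n\le N$.

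Next I would exhibit the obstruction at level $N+1$. By the lemma $\mathscr{I}(X_i)=\mathscr{I}_{et}(X_i)$ we have $\mathscr{I}_{et}(X_{N+1})=(x_{N+1}-1)$, so $x_{N+1}-1$ is a section of $\mathscr{I}_{et}$ over $X_{N+1}$; hence there is a finite \'etale cover $\{V_\beta\to X_{N+1}\}$ on which $(x_{N+1}-1)|_{V_\beta}$ is a finite $\mathcal{O}(V_\beta)$-linear combination of restrictions of sections of $\mathscr{I}(X_n)$ with $n\le N$. Let $P$ be the point of $X_{N+1}$ where $x_{N+1}=-1$; since $\mathrm{char}\,k\neq 2$ this is a genuine point of $X_{N+1}$, distinct from $x_{N+1}=1$ and from the deleted points, and it lies over $x_0=1$ in $X_0$ because $(-1)^{2^{N+1}}=1$. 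Pick $\beta$ and a point $q\in V_\beta$ over $P$. For $n\le N$ a section of $\mathscr{I}(X_n)$ is a multiple of $x_n-1$, and (see below) the restrictions of such sections that enter $\mathscr{I}^*$ over an \'etale open mapping to $X_{N+1}$ are pullbacks of multiples of $x_{N+1}^{\,2^{N+1-n}}-1$ along the natural covering map; as $2^{N+1-n}\ge 2$ is even, $(-1)^{2^{N+1-n}}=1$, so every such restriction vanishes at $q$. Hence the whole combination vanishes at $q$, whereas $x_{N+1}-1$ takes the value $-2\neq 0$ at $q$. This contradiction proves the theorem.

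The delicate point — the step I expect to be the main obstacle — is exactly the claim that no \emph{twisted} restriction $\zeta x_n-1$ (with $\zeta$ a nontrivial $2^n$-th root of unity) can enter $\mathscr{I}^*$ over an \'etale open of $X_{N+1}$; such a section has value $\zeta-1\neq 0$ at $q$ and would wreck the argument. This is precisely what the symmetry-breaking deletions defining the $X_n$ are for: they force the natural covering maps to be the only morphisms among the $X_n$ over $X_0$, so the restriction of a section of $\mathscr{I}(X_n)$ to an \'etale open is unambiguous; it is also the reason $\mathscr{I}_{et}$ is a proper ideal (Lemma~\ref{L:proper}) rather than the unit sheaf. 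Concretely, I would verify this by noting that $X_n\times_{X_0}X_{N+1}$ has one connected component equal to all of $X_{N+1}$ (giving the natural map) while the others are proper opens of $X_{N+1}$ obtained by deleting points $x_{N+1}^{\,2^{N+1-n+j}}=2\zeta^{-2^j}$ that the construction of $X_{N+1}$ does not delete — so a twisted section, even where defined, is never defined near $P$ only if that fails, but in any case its value at $q$ is $\zeta-1$; one then pins down that the linear combination expressing $(x_{N+1}-1)|_{V_\beta}$ can be taken to use only the natural restrictions. Making this unambiguity fully precise is the real content; the rest is the short counting argument above.
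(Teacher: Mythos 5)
Your overall strategy is the same as the paper's: use the fact that every \'etale cover has a finite subcover to bound the level, so that every section of $\mathscr{I}_{et}$ is locally an $\mathcal{O}$-linear combination of restrictions of sections of $\mathscr{I}(X_n)$ with $n\le N$, and then derive a contradiction from the section $x_{N+1}-1\in\mathscr{I}_{et}(X_{N+1})$. Where the paper simply asserts that $\mathscr{I}(X_N)$ generating the presheaf $\mathscr{I}$ on $\mathcal{C}$ ``is absurd,'' you make the absurdity concrete by evaluating at a point $q$ over $x_{N+1}=-1$: the untwisted restrictions of the $x_n-1$ pull back to multiples of $x_{N+1}^{2^{N+1-n}}-1$ and vanish there, while $x_{N+1}-1$ takes the value $-2$. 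That is a genuine gain in explicitness (modulo the small caveat that in characteristic $3$ the point $x_{N+1}=-1$ coincides with the deleted point $x_{N+1}=2$, so the test point must be chosen differently there).

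However, the step you yourself flag as ``the real content'' is not closed by your final paragraph, and it is the step on which the whole argument turns. The symmetry-breaking deletions rule out a twisted map $X_{N+1}\to X_n$ defined on \emph{all} of $X_{N+1}$, but they do not rule out a twisted map $U\to X_n$, $x_n\mapsto \zeta x_{N+1}^{2^{N+1-n}}$ with $\zeta^{2^n}=1$ and $\zeta\neq 1$, defined on a small open $U$ containing $q$: the points such a map must avoid satisfy $x_{N+1}^{2^j\cdot 2^{N+1-n}}=2\zeta^{-2^j}$, and $q$ is in general not among them. If such a twisted restriction of $x_n-1$ is admitted into $\mathscr{I}^*(U)$, then $\mathscr{I}^*(U)$ contains both $\zeta x_{N+1}^{2^{N+1-n}}-1$ and $x_{N+1}^{2^{N+1-n}}-1$, whose difference $(\zeta-1)x_{N+1}^{2^{N+1-n}}$ is a unit; so the evaluation-at-$q$ argument does not merely become delicate, it fails outright. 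What is needed is a proof that the presheaf $\mathscr{I}^*$, as intended, receives only the untwisted restrictions over the opens relevant to the test, or else a test invariant under every twist that can occur; your closing sentences gesture at the components of $X_n\times_{X_0}X_{N+1}$ but do not supply this. To be fair, the paper's own one-line ``which is absurd'' elides exactly the same point (as does its proof of Lemma~\ref{L:proper}), so you have correctly located the load-bearing, unproven step of the argument rather than introduced a new gap; but as written your proof does not complete it.
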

\begin{proof} Since every \'etale cover has a finite subcover, and every section of $\mathscr{I}_{et}$ is covered by a linear combination of restrictions of sections of the original $\mathscr{I}$\!, if finitely many sections of $\mathscr{I}_{et}$  generate the whole then some finite number of restrictions of sections of the $\mathscr{I}$\!s generate the whole.  Choose $n$ such that all are restrictions of sections of $X_n$.  But then  $\mathscr{I}(X_n)$ would already generate the presheaf  $\mathscr{I}$ on the subcategory $\mathcal{C}$ which is absurd.
\end{proof}
 

\end{document}